\theoremstyle{plain}
\newtheorem{lemma}{Lemma}[section]
\newtheorem{prop}[lemma]{Proposition}
\newtheorem{theorem}{Theorem}
\newtheorem*{mocon}{Min--Oo Conjecture}
\newtheorem*{prop*}{Proposition}
\theoremstyle{definition}
\newtheorem{defn}[lemma]{Definition}
\newtheorem{remark}[lemma]{Remark}
\newtheorem{ex}[lemma]{Example}
\newtheorem*{ack}{Acknowledgement}
\theoremstyle{remark}
\newtheorem*{pfthmMO2}{Proof of Theorem~\ref{th:MO2}}
\newtheorem*{pfthmsmax2}{Proof of Theorem~\ref{th:smax2}}
\newtheorem*{pfleHopfbdy}{Proof of Lemma~\ref{le:Hopfbdy}}
\newtheorem*{pfleHopf2}{Proof of Lemma~\ref{le:Hopf2}}
\newtheorem*{pfthmkcvx}{Proof of Theorem~\ref{th:kcvx}}
\newtheorem*{pfthmkcvxgraph}{Proof of Theorem~\ref{th:kcvxgraph}}
\newtheorem*{pfthmhypmeanPDE}{Proof of Theorem~\ref{th:hypmeanPDE}}
\newtheorem*{pfthmhypmean}{Proof of Theorem~\ref{th:hypmean}}
\numberwithin{equation}{section}
\newenvironment{enumeratei}{\begin{enumerate}[\upshape (i)]}{\end{enumerate}}
\newcommand{\vol}{\textup{Vol}}
\begin{document}
\title[Rigidity theorem on hemispheres]{Rigidity Theorems on Hemispheres in Non--positive Space Forms}
\author{Lan--Hsuan Huang}
\address{Department of Mathematics\\
					Columbia University\\
					New York, NY 10027.}
\email{lhhuang@math.columbia.edu}
\author{Damin Wu}
\address{Department of Mathematics \\
                The Ohio State University \\
                1179 University Drive, Newark, OH 43055.}
\email{dwu@math.ohio-state.edu}

\begin{abstract}
We study the curvature condition which uniquely characterizes the hemisphere. In particular, we prove the Min--Oo conjecture for hypersurfaces in Euclidean space and hyperbolic space. 
\end{abstract}
\maketitle

\section{Introduction} 
There have been many results on characterization of manifolds with non-negative scalar curvature.  
One of the most important theorems is the Positive Mass Theorem proved by Schoen--Yau~\cite{SY79, SY81}, and later by Witten~\cite{Witten}. A special case of their theorem tells us that, if $(M^3,g)$ is asymptotically flat with non-negative scalar curvature, then the ADM mass defined at each end is non-negative; furthermore, if the ADM mass is zero for one end, then $(M^3,g)$ is isometric to Euclidean space. 

For a manifold $M$ with boundary $\partial M$, one can ask a similar question: Under what conditions on $\partial M$ is $M$ isometric to a standard model? This question is, in fact, related to the concept of quasi-local mass in general relativity. Quasi-local mass is a quantity defined on $\partial M$ which measures the energy content of $M$. Shi and Tam \cite{ST} proved that, for a three-manifold $M$ of non-negative scalar curvature, if $\partial M$ has positive Gauss curvature, and if the Brown--York mass is zero, i.e.
\[
		\int_{\partial M} ( H_0 - H ) \, d\sigma = 0, 
\]	
then $M$ is isometric to a domain in Euclidean space. Here $H$ and $H_0$ are the mean curvatures of $\partial M$ induced from $M$ and $\mathbb{R}^3$, respectively.  Miao \cite{M} and Hang--Wang \cite{HW1} also prove some rigidity results on a flat region in Euclidean space under different assumptions.

Besides a flat region in Euclidean space, one can also consider standard spheres as the standard model.
There have been several attempts to understand what properties can uniquely characterize the hemisphere $\mathbb{S}^{n}_+$. The following conjecture was proposed by Min--Oo in 1995.
\begin{mocon}
Le $M$ be an $n$-dimensional compact manifold with boundary $\partial M$. Assume that $M$ has scalar curvature $R\ge n(n-1)$. Furthermore, assume that $\partial M$ is isometric to the unit sphere $\mathbb{S}^{n-1}$, and that $\partial M$ is totally geodesic in $M$. Then $M$ is isometric to the hemisphere $\mathbb{S}^{n}_+$. 
\end{mocon}
While the conjecture is still open, 
some partial results have been obtained. Hang--Wang \cite{HW1, HW2} proved the conjecture under the stronger condition that, either $g$ is conformal to the standard sphere metric, or the Ricci curvature $\mbox{Ric} \ge (n-1)g$. Eichmair \cite{E} proved the conjecture for $n=3$ by assuming $\mbox{Ric} > 0$ in $M$ and an isoperimetric condition on $\partial M$. For other recent results on rigidity, see, for example, \cite{BBEN} and \cite{BBN}.

Let us try to understand the assumptions in the Min--Oo conjecture. Obviously the condition $R \ge n(n-1)$ is necessary, because one can otherwise perturb the hemisphere at an interior point so that $R\ge n(n-1) - \epsilon$, for some small $\epsilon >0$, without changing the assumptions on the boundary. However, it seems that the assumptions on the boundary can possibly be weakened or replaced by other conditions. For example, under the assumption that $\mbox{Ric} \ge (n-1)g$, Hang--Wang~\cite{HW2} relaxed the totally geodesic condition on $\partial M$ to be 
the condition that $\partial M$ is convex in $M$.

In this paper, we study the hypersurfaces in Euclidean space and hyperbolic space, and obtain several curvature conditions which characterize the hemisphere.
We are able to drop the totally geodesic condition on $\partial M$, and also relax the isometry condition on $\partial M$. 
In particular, we prove the Min--Oo conjecture for the hypersurfaces in these non-positive space forms. Our method in fact works for a more general situation than the incorporation condition we state below. Roughly speaking, the proofs work for any compact hypersurface $M$ with boundary $\partial M$, as long as the unit $n$-sphere can travel through $\partial M$.


Let $M$ be a smooth hypersurface in $\mathbb{R}^{n+1}$. Denote by $\kappa_i$, $i=1,\ldots,n$, the principle curvatures of $M$. We define, for each $1 \le k \le n$, the \emph{$k$-th mean curvature} of $M$ to be
\[
   \sigma_k(\kappa) = \sum_{1 \le i_1 < \cdots < i_k \le n} \kappa_{i_1}\cdots \kappa_{i_k}.
\]
In particular, $\sigma_1(\kappa)$, $2 \sigma_2(\kappa)$, and $\sigma_n(\kappa)$ are the mean curvature, the scalar curvature, and the Gauss--Kronecker curvature of $M$, respectively. Our convention of the mean curvature is that the unit $n$-sphere has mean curvature $n$ with respect to the inward unit normal vector. We say that $M$ is \emph{$q$-convex}, $1 \le q \le n$, if its $j$-th mean curvature is positive, for all $j = 1, \ldots, q$. These definitions also make sense for a $C^2$ hypersurface. 

Let $B_1$ be the open unit ball in the hyperplane $\mathbb{R}^n \times \{0\}$ centered at the origin. Denote by $\mathcal{C}_+$ the upper solid hemicylinder, i.e.,
\[
   \mathcal{C}_+ 
		= \{ (x^1, \dots, x^{n+1} ) \in \mathbb{R}^{n+1}: r \le 1 \mbox{ and } x^{n+1} > 0 \}.
\]
Throughout this paper, we denote 
\[
    r = \sqrt{(x^1)^2 + \cdots + (x^n)^2}, \qquad \textup{for all $(x^1,\ldots,x^n) \in \mathbb{R}^n$}.
\]
It is convenient to introduce the following definition.
\begin{defn} \label{de:incp}
Let $M \subset \mathbb{R}^{n+1}$ be a compact hypersurface which is $C^2$ up to the boundary $\partial M$. We say that $M$ satisfies the \emph{incorporation} condition, if $M$ satisfies the following three conditions
\begin{enumeratei}
  \item  \label{de:incp:1} $\partial M$ is diffeomorphic to $\mathbb{S}^{n-1}$.
  \item  \label{de:incp:2} $\partial M \subset \mathbb{R}^n \times \{0\}$, and $B_1$ is contained in the region enclosed by $\partial M$ in $\mathbb{R}^n \times \{0\}$.
  \item \label{de:incp:3} $M \cap \mathcal{C}_+ = \emptyset$.
\end{enumeratei}
\end{defn}
The rigidity theorem in Euclidean space is as follows:
\begin{theorem} \label{th:kcvx}
Let $M \subset \mathbb{R}^{n+1}$ be a compact $C^2$ hypersurface with boundary $\partial M$. Assume that $M$ satisfies the incorporation condition. Suppose for some integer $1 \le k \le n$ that
\begin{equation} \label{eq:kvx}
		\sigma_k (\kappa) \ge \binom{n}{k},
\end{equation}
and that $M$ is $k$-convex if $k \ge 3$. Then $M$ is isometric to the hemisphere $\mathbb{S}^n_+$.
\end{theorem}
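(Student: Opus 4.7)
The plan is to compare $M$ pointwise to a unit sphere using a sphere-touching construction, then to use the curvature hypothesis together with a strong maximum principle to force $M$ to coincide with this sphere, and finally to identify the resulting piece of the unit sphere with a hemisphere using the boundary data. The key observation is that the incorporation condition lets a unit sphere enter through $\partial M$ from above and contact $M$ at an interior point. Concretely, consider the family of unit spheres $S_t$ centered at $(0,\ldots,0,t)$. Every point of $S_t$ satisfies $r\le 1$, hence $S_t\subset\overline{\mathcal{C}_+}$ for $t\ge 1$; by the incorporation condition, $S_t\cap M=\emptyset$ there. For $t\in(0,1)$ the equator $S_t\cap(\mathbb{R}^n\times\{0\})$ is a round $(n-1)$-sphere of radius $\sqrt{1-t^2}<1$ contained in the region enclosed by $\partial M$, so $S_t\cap\partial M=\emptyset$. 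On the compact set $M\cap\{r\le 1\}$ I would introduce
\[
    t_0(q)=q^{n+1}+\sqrt{1-r(q)^2}\,,
\]
which records the largest $t$ such that $q$ lies on the lower hemisphere of $S_t$. Let $t_*$ be its maximum, attained at some $p\in M$. Then $S_{t_*}$ is tangent to $M$ at $p$, with $M$ lying locally below the lower hemisphere of $S_{t_*}$---equivalently, locally outside the unit ball bounded by $S_{t_*}$. Because any point of $M$ with $r=1$ has $x^{n+1}\le 0$, one has $r(p)<1$ whenever $t_*>0$, and since $\partial M\subset\{r\ge 1\}$ this places $p\in M^\circ$.

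At $p$, writing $M$ and the lower hemisphere of $S_{t_*}$ as graphs $x^{n+1}=u(x)$ and $x^{n+1}=v(x)$ over the common tangent plane, one has $u\le v$ near $p$ with equality at $p$; hence the principal curvatures of $M$ at $p$ with respect to the common upward normal satisfy $\kappa_i(p)\le 1$ for every $i$. Combined with $\sigma_k(\kappa(p))\ge\binom{n}{k}$ and (for $k\ge 3$) the $k$-convexity hypothesis placing $\kappa(p)$ in the G\aa rding cone $\Gamma_k$, the Newton--Maclaurin inequality yields $\sigma_1(\kappa(p))\ge n$; together with $\kappa_i(p)\le 1$, one concludes $\kappa_i(p)=1$ for all $i$. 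At this configuration $\sigma_k$ is uniformly elliptic, so the nonnegative difference $w=v-u$ satisfies a linear elliptic inequality $a^{ij}\partial_i\partial_j w+b^i\partial_i w\le 0$ on a neighborhood of $p$ with $w(p)=0$, and Hopf's strong maximum principle gives $w\equiv 0$ locally, i.e.\ $M$ coincides with $S_{t_*}$ in a neighborhood of $p$ in $M$. A standard open-closed argument using connectedness of $M$ extends this to $M\subset S_{t_*}$. Finally, $\partial M\subset S_{t_*}\cap(\mathbb{R}^n\times\{0\})$ is a round $(n-1)$-sphere of radius $\sqrt{1-t_*^2}$ enclosing $B_1$, forcing $t_*=0$ and $\partial M=\mathbb{S}^{n-1}$; the condition $M\cap\mathcal{C}_+=\emptyset$ then selects $M$ as the lower hemisphere, isometric to $\mathbb{S}^n_+$.

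The main obstacle I anticipate is in the tangent-sphere construction: ensuring that the maximizer $p$ of $t_0$ is an interior point of $M$, and handling the degenerate cases in which $t_*\le 0$, or the maximizer lies on $M\cap\{r=1\}$, or $M\cap\{r\le 1\}$ is atypically small. A careful use of the incorporation condition together with diameter or containment estimates coming from $\sigma_k\ge\binom{n}{k}$ should rule out these possibilities, possibly after enlarging the family of test spheres (off-axis centers) or arguing by an approximation. The remaining steps---Newton--Maclaurin on $\Gamma_k$, ellipticity of $\sigma_k$ at $\kappa=(1,\ldots,1)$, and Hopf's strong maximum principle for the resulting linearization---are routine.
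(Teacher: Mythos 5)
Your plan shares the sphere--touching skeleton of the paper's argument, but inverts the order of reduction, and this inversion creates a genuine gap for $k=2$ (the scalar-curvature case, which is the one settling the Min--Oo conjecture). The paper first reduces the $\sigma_k$ hypothesis to the purely mean-curvature statement $|H_0|\geq n$ --- via the trace-free identity \eqref{eq:LanNewton} when $k=2$, and via Maclaurin when $k\geq 3$ --- and then runs the sliding-sphere comparison entirely at the level of the operator $H_0$, which is uniformly elliptic with no curvature assumption (Lemma~\ref{le:mean}, Theorem~\ref{th:smax2}). You instead go to the tangency point first and try to pin down $\kappa(p)=(1,\ldots,1)$ pointwise by combining the touching estimate $\kappa_i(p)\leq 1$ with $\sigma_k(\kappa(p))\geq\binom{n}{k}$ and Maclaurin. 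This works for $k=1$ trivially and for $k\geq 3$ because $k$-convexity puts $\kappa(p)$ in $\Gamma_k$. But for $k=2$ the theorem does \emph{not} assume $2$-convexity, and the pointwise constraints $\kappa_i(p)\leq 1$, $\sigma_2(\kappa(p))\geq\binom{n}{2}$ are also satisfied by $\kappa(p)=(-C,\ldots,-C)$ for any $C\geq 1$; Maclaurin cannot be applied to conclude $\sigma_1(\kappa(p))\geq n$ because $\sigma_1(\kappa(p))$ has no a priori sign. In that scenario $H_0(p)\leq -n$ with respect to the upward normal while the comparison sphere has $H_0=n$, so the comparison inequality runs the \emph{wrong} way and the strong maximum principle does not engage. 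The identity \eqref{eq:LanNewton} sidesteps this by producing $|H_0|\geq n$ directly, after which a global choice of unit normal (justified by the incorporation condition) makes the sign correct, and one never needs the principal curvatures at $p$ to be individually pinned.

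A second, smaller issue is the boundary-touching case, which you flag as an anticipated obstacle but do not resolve: the expected equality case has $t_*=0$ with the maximizer of your $t_0$ on $\partial M$, and one must then invoke the Hopf boundary point lemma (part~\eqref{th:smax2:2} of Theorem~\ref{th:smax2}) to derive a strict normal derivative inequality and slide the sphere slightly lower for a contradiction, as the paper does at the end of the proof of Lemma~\ref{le:mean}. Relatedly, you would also need to show $t_*\geq 0$; this again comes out naturally once one works with $|H_0|\geq n$ rather than $\sigma_k$. Finally, even for $k\geq 3$ your linearization of $\sigma_k$ requires $\Gamma_k$-ellipticity along the segment joining the two Hessians, which is guaranteed only in a small neighborhood where $\kappa$ stays near $(1,\ldots,1)$; the mean-curvature reduction avoids any such ellipticity bookkeeping because $H_0$ is elliptic unconditionally.
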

Note that for $k=2$, \eqref{eq:kvx} is equivalent to that $R \ge n(n-1)$. Also the convexity is not required for $k=2$. This theorem in particular settles the Min--Oo conjecture for the hypersurfaces in Euclidean space. Furthermore, if $M$ is a graph of a function, then the incorporation condition can be dropped. Here is the result for the scalar curvature.
\begin{theorem} \label{th:MO2}
		Let $u \in C^2 (B_1)\cap C^0(\bar{B}_1)$, and $M_u$ be the graph of $u$ over $\bar{B}_1$ in $\mathbb{R}^{n+1}$. If $M_u$ has induced scalar curvature $R \ge n(n-1)$, then $M_u$ is isometric to the hemisphere $\mathbb{S}_+^n$. 
\end{theorem}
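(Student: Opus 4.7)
The plan is to combine a sliding-hemisphere argument with the strong maximum principle for the fully nonlinear scalar-curvature operator on graphs. Theorem~\ref{th:kcvx} with $k=2$ is close to what we want but cannot be invoked directly, because the incorporation condition is not automatic for a graph and, worse, $u$ is only assumed $C^0$ up to the boundary.

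First, I would normalize by a vertical translation so that $\max_{\bar B_1} u = 0$; since $u \in C^0(\bar B_1)$, this maximum is attained. In particular $u \le 0$ on $\bar B_1$, so $M_u$ lies in the closed lower half-space, which takes care of condition (iii) of the incorporation definition. Introduce the family of lower unit hemispheres
\[
  w_t(x) = t - \sqrt{1 - |x|^2}, \qquad |x|\le 1,\ t\in\mathbb{R},
\]
each with induced scalar curvature exactly $n(n-1)$, and set $t_0 = \max_{\bar B_1}(u(x) + \sqrt{1-|x|^2})$. Then $w_{t_0} \ge u$ on $\bar B_1$ with equality at some $x_0 \in \bar B_1$. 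If $x_0 \in B_1$, then both $u$ and $w_{t_0}$ are $C^2$ near $x_0$ and $w_{t_0} - u$ attains an interior minimum there; because near a unit sphere the scalar-curvature operator on graphs is a concave, elliptic fully nonlinear operator in the positive $\sigma_2$-cone, the strong maximum principle applied to its linearization forces $u \equiv w_{t_0}$ on a neighborhood of $x_0$. A standard open-and-closed argument propagates the identity to all of $B_1$ and then by continuity to $\bar B_1$, so $M_u$ is a vertical translate of the standard lower hemisphere and hence isometric to $\mathbb{S}^n_+$.

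The main obstacle is the case when the contact point $x_0$ lies only on $\partial B_1$, where $u$ may fail to be $C^2$ and no direct Hopf-type lemma applies. To handle this I would pair the argument with the symmetric sliding of upward hemispheres $v_s(x) = s + \sqrt{1-|x|^2}$ from below, and use that the normalization $\max u = 0$ together with the two one-sided comparisons forces matching growth of $u$ and a unit hemisphere near $\partial B_1$. On each concentric subdomain $\bar B_\rho$ with $\rho < 1$ the restricted graph is $C^2$ up to its boundary, so one can implement a comparison there in which an interior touch must actually occur; sending $\rho \to 1^-$ and using continuity of $u$ on $\bar B_1$ closes the argument. The main technical point is justifying the boundary comparison in this degenerate regularity class, for which the explicit form of the hemispheres as natural barriers with matching singular behavior at $\partial B_1$ is essential.
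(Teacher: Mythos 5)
Your approach is genuinely different from the paper's, and it has a gap that I do not think can be repaired without essentially changing strategy.

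The paper's proof is an \emph{integral} argument, not a sliding argument. It first uses the algebraic identity $\left(\tfrac{H_0}{n}\right)^2 = \tfrac{\sigma_2(A)}{\binom{n}{2}} + \tfrac{|\mathring{A}|^2}{n(n-1)}$ to deduce from $2\sigma_2 \ge n(n-1)$ that $H_0^2 \ge n^2 + \tfrac{n}{n-1}|\mathring{A}|^2$, hence $|H_0|\ge n$ with a definite sign throughout $B_1$. Then it exploits that the Euclidean mean curvature of a graph is a divergence: integrating over $B_a$ and using $\left|\tfrac{Du}{\sqrt{1+|Du|^2}}\right|\le 1$ gives $\left|\int_{B_a}H_0\right|\le \tfrac{n}{a}\vol(B_a)$, and letting $a\to 1^-$ yields $\left|\int_{B_1}H_0\right|\le n\vol(B_1)$. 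Combined with the pointwise bound $|H_0|\ge n$, this forces equality, hence $\mathring{A}\equiv 0$, and the graph is umbilic with all principal curvatures $1$. No maximum principle, no contact analysis, and no boundary regularity beyond $C^0$ is needed. This is exactly why $u\in C^0(\bar B_1)$ with arbitrary boundary data suffices, a feature your argument struggles to accommodate.

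The gap in your proposal is the boundary-touch case, which you acknowledge as the ``main technical point'' but do not actually close. If the first contact between $u$ and the sliding hemisphere $w_{t_0}$ occurs only at $x_0\in\partial B_1$, then $u$ is merely continuous there, so no Hopf lemma, no derivative comparison, and no ``matching growth'' argument is available: $u$ can approach its boundary value with essentially arbitrary modulus of continuity, while $w_t$ has $\partial_r w_t\to+\infty$ at $\partial B_1$. Passing to $\bar B_\rho$ with $\rho<1$ does not rescue the argument either, because the first contact time $t_\rho$ on $\bar B_\rho$ may always be attained on $\partial B_\rho$ as $\rho\to 1^-$; nothing in the hypotheses forces an interior touch. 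In short, the sliding argument is designed to compare derivatives at a contact point, and the $C^0$ regularity of $u$ at $\partial B_1$ gives you nothing to compare. A secondary point is that you would also need to work with the linearization of the fully nonlinear $\sigma_2$ operator and verify that the graph stays in $\Gamma_2$ (or $-\Gamma_2$); this is doable via the same Newton-type inequality, but the paper sidesteps all of it by reducing to the quasilinear mean curvature operator and then to a plain divergence identity. I would redo the proof along the integral lines: once you have the pointwise bound $|H_0|\ge n$ from $\sigma_2\ge\binom{n}{2}$, the divergence-theorem estimate on the total mean curvature is the whole argument.
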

Note that the function $u$ here can be prescribed any continuous boundary value. 
Theorem~\ref{th:MO2} is generalized to the statement for the $k$-th mean curvature (see Theorem~\ref{th:kcvxgraph}, in Section~\ref{se:Eu}, for details.).

Next, we consider the upper half-space model of the hyperbolic space $\mathbb{H}^{n+1}$. We consider the hypersurfaces satisfying the \emph{hyperbolic incorporation condition} (see Definition~\ref{de:hypincp} for details.). This definition is the same as Definition~\ref{de:incp}, except that $\mathbb{R}^{n+1}$ is replaced by $\mathbb{H}^{n+1}$, that the hyperplane $\mathbb{R}^n \times \{0\}$ in Definition~\ref{de:incp} \eqref{de:incp:2} is replaced by $\mathbb{R}^n \times \{1\}$, and that the cylinder $\mathcal{C}_+$ in \eqref{de:incp:3} is replaced by the following upper solid hemicone.
\[
		\mathfrak{C}_+= \{ ( x^1, \dots, x^{n+1} ) \in \mathbb{H}^{n+1} : x^{n+1} \ge r \mbox{ and } x^{n+1} > 1\}.
\]
We obtain similar rigidity results in $\mathbb{H}^{n+1}$.
\begin{theorem} \label{th:hypkcvx}
  Let $M \subset \mathbb{H}^{n+1}$ be a compact $C^2$ hypersurface with boundary $\partial M$. Assume that $M$ satisfies the hyperbolic incorporation condition. Suppose for some integer $1 \le k \le n$ that 
 \begin{equation} \label{eq:hypkcvx}
     \sigma_k(\kappa) \ge 2^{k/2}\binom{n}{k},
  \end{equation}
  and that $M$ is $k$-convex if $k \ge 3$. Then $M$ is isometric to $\mathbb{S}^n_+$.
\end{theorem}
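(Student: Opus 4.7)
The plan is to adapt the sliding-sphere argument underlying Theorem~\ref{th:kcvx} to the hyperbolic setting. In $\mathbb{H}^{n+1}$, the role of the Euclidean unit sphere is taken by the geodesic sphere of hyperbolic radius $\rho_0:=\sinh^{-1}(1)$, all of whose principal curvatures equal $\coth\rho_0=\sqrt{2}$; thus $\sigma_k(\kappa)=2^{k/2}\binom{n}{k}$, saturating \eqref{eq:hypkcvx}, and the sphere is intrinsically isometric to the unit $n$-sphere. In the upper half-space model, such a geodesic sphere centered at the hyperbolic point $(0,h)$ appears as the Euclidean sphere of Euclidean radius $h$ and Euclidean center $(0,h\sqrt{2})$. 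The reference choice $h=\sqrt{2}$ gives the sphere $\Sigma_*$, which meets the horosphere $\{x^{n+1}=1\}$ precisely in $\partial B_1$; moreover, the totally geodesic hyperplane of $\mathbb{H}^{n+1}$ through $\partial B_1$ (realized as the Euclidean hemisphere of radius $\sqrt{2}$ about the origin) also passes through the hyperbolic center of $\Sigma_*$, so it cuts $\Sigma_*$ along $\partial B_1$ into two hemispherical caps each isometric to $\mathbb{S}^n_+$.

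I would introduce the one-parameter family $\{\Sigma_h\}_{h>0}$ of geodesic spheres of hyperbolic radius $\rho_0$ centered at $(0,h)$; these are hyperbolically isometric to $\Sigma_*$ because Euclidean dilations are hyperbolic isometries of upper half-space. A direct computation shows that each $\Sigma_h$ is tangent from above to the Euclidean cone $\{x^{n+1}=r\}$ and reaches its lowest point at $x^{n+1}=h(\sqrt{2}-1)$; hence $\Sigma_h\subset\mathfrak{C}_+$ whenever $h>\sqrt{2}+1$, and by the hyperbolic incorporation condition $\Sigma_h\cap M=\emptyset$ in that range. Let $h_*$ be the supremum of values $h$ for which $\Sigma_h\cap M\ne\emptyset$. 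By compactness there is a first contact point $p\in M\cap\Sigma_{h_*}$, and for $h$ slightly larger than $h_*$ the two hypersurfaces are disjoint.

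At $p$ the hypersurfaces share a tangent plane and $M$ lies on the concave side of $\Sigma_{h_*}$; combined with $\sigma_k(M)\ge 2^{k/2}\binom{n}{k}=\sigma_k(\Sigma_{h_*})$ and the $k$-convexity hypothesis (automatic for $k=1,2$), this is exactly the setup for the strong maximum principle for the fully nonlinear operator $\sigma_k$ on the $k$-convex cone. If the contact happens at a boundary point $p\in\partial M$, one invokes instead the corresponding Hopf-type boundary lemma for the $\sigma_k$-operator, analogous to the one used in the proof of Theorem~\ref{th:kcvx}. Either way, $M$ agrees with $\Sigma_{h_*}$ in a neighborhood of $p$. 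A standard open-and-closed argument, using $C^2$ continuity to extend the agreement set, then gives $M\subset\Sigma_{h_*}$ globally, so $M$ is a compact codimension-zero submanifold of $\Sigma_{h_*}$ with boundary $\partial M\subset\Sigma_{h_*}\cap\{x^{n+1}=1\}$. This intersection is a round $(n-1)$-sphere about the origin of Euclidean radius $\sqrt{-h_*^2+2\sqrt{2}h_*-1}\le 1$, with equality if and only if $h_*=\sqrt{2}$. Since $\partial M$ is diffeomorphic to $\mathbb{S}^{n-1}$ and encloses $B_1$, it must be the full round $(n-1)$-sphere of radius at least $1$, which forces $h_*=\sqrt{2}$ and $\partial M=\partial B_1$. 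Because $M$ also avoids $\mathfrak{C}_+$, it coincides with the lower hemispherical cap of $\Sigma_*$, hence is isometric to $\mathbb{S}^n_+$.

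The main obstacle is arranging the sliding so that the first contact is geometrically tractable and the strong maximum principle for $\sigma_k$ on the $k$-convex cone applies with the correct sign of the comparison. The hemicone condition is used decisively here: it forces the family $\{\Sigma_h\}_{h>\sqrt{2}+1}$ to sweep downward onto $M$ from within $\mathfrak{C}_+$, placing $M$ on the concave side of $\Sigma_{h_*}$ at the first contact. The remaining ingredients — $C^2$ regularity up to $\partial M$, the $k$-convexity needed to apply the strong maximum principle when $k\ge 3$, and the Hopf-type boundary lemma for $\sigma_k$ — are standard and already appear in the proof of the Euclidean analogue (Theorem~\ref{th:kcvx}); the hyperbolic argument differs from that proof essentially only in replacing Euclidean unit hemispheres by hyperbolic geodesic spheres of radius $\sinh^{-1}(1)$ and in exploiting that Euclidean dilation is a hyperbolic isometry of upper half-space.
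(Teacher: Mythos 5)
Your sliding family is the same as the paper's: a geodesic sphere of hyperbolic radius $\sinh^{-1}(1)$ centered at height $h$ on the vertical axis is the Euclidean sphere with center at height $h\sqrt{2}$ and radius $h$, i.e.\ the paper's $S(q)$ with $q=h\sqrt{2}$, and your $\Sigma_*$ is the model sphere $M_v$. However, the route you take to a contact argument differs from the paper's. The paper never invokes a strong maximum principle for the fully nonlinear operator $\sigma_k$: it first reduces \eqref{eq:hypkcvx} to the mean curvature bound $H\ge\sqrt{2}n$ (via the Gauss identity of Proposition~\ref{pr:hypRH} when $k=2$, and via Maclaurin's inequality \eqref{eq:Mac} when $k\ge 3$), and then applies Theorem~\ref{th:hypmean}, whose proof rests on the quasilinear strong maximum principle of Theorem~\ref{th:smax2}. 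That quasilinear principle is proved from scratch in the Appendix precisely because, as Example~\ref{ex:hypH} shows, the hyperbolic mean curvature operator violates the usual comparison principle. Your appeal to a Hopf-type lemma ``for the $\sigma_k$-operator, analogous to the one used in the proof of Theorem~\ref{th:kcvx}'' is a misremembering: that proof also reduces to $H$. A fully nonlinear maximum principle on the $\Gamma_k$ cone can be made to work, but then it needs its own careful justification (ellipticity only holds in $\Gamma_k$, and for $k=2$ the membership $\kappa\in\Gamma_2$ is not automatic from $\sigma_2>0$ alone; the paper gets the sign of $H$ from the identity \eqref{eq:hypRH}).

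More serious is the gap at the boundary contact. You write that if the first contact happens at a boundary point $p\in\partial M$, a boundary point lemma gives that ``either way, $M$ agrees with $\Sigma_{h_*}$ in a neighborhood of $p$.'' It does not. Part~\eqref{th:smax2:2} of Theorem~\ref{th:smax2} yields only a \emph{strict inequality} on the outward normal derivative of the difference at $p$, not local agreement. In the paper this inequality is used for something else entirely: it shows that $M$ dips strictly below the cone $\{x^{n+1}=|x|\}$ near $\partial B_1$ (the ``claim'' around \eqref{eq:cone} in the proof of Theorem~\ref{th:hypmeanPDE}), which permits a \emph{second} sliding phase with $q<2$; the subsequent case analysis ($\sqrt{2}\le q_1<2$ versus $0<q_1<\sqrt{2}$, where the comparison spheres become vertical inside $B_1$) forces an interior tangency and a contradiction. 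Your proposal omits this step, so as written it does not exclude the scenario in which $M$ touches $\Sigma_*$ only along $\partial B_1=\partial M$ while lying strictly below it in the interior.
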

Let us remark that when $k=2$, the condition \eqref{eq:hypkcvx} is equivalent to $R \ge n(n-1)$, in view of the Gauss equation \eqref{eq:hypR2}. The convexity is not required for this case. This proves the Min--Oo conjecture in  hyperbolic space. In particular, when $M$ is graphical, we can relax the hyperbolic incorporation condition. The rigidity result for the scalar curvature is as follows:
\begin{theorem} \label{th:hyp2graph}
  Let $u \in C^2(B_1)\cap C^1(\bar{B}_1)$ satisfy that $u > 0$ in $B_1$ and $u = 1$ on $\partial B_1$. If the graph of $u$, denoted by $M_u$, has hyperbolic scalar curvature $R \ge n(n-1)$, then $M_u$ is isometric to $\mathbb{S}_+^n$.  
\end{theorem}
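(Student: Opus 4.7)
The plan is to reduce Theorem~\ref{th:hyp2graph} to Theorem~\ref{th:hypkcvx} with $k=2$ (no convexity assumption is needed in that case) by verifying that $M_u$ satisfies the hyperbolic incorporation condition of Definition~\ref{de:hypincp}. The boundary hypothesis $u\equiv 1$ on $\partial B_1$ immediately gives conditions (i) and (ii): $\partial M_u=\partial B_1\times\{1\}$ is diffeomorphic to $\mathbb{S}^{n-1}$ and encloses $B_1$ inside $\mathbb{R}^n\times\{1\}$. Since $|x|\le 1$ on $\bar{B}_1$, condition (iii) $M_u\cap\mathfrak{C}_+=\emptyset$ collapses to the pointwise inequality
\[
u(x)\le 1\qquad\text{for every }x\in\bar{B}_1,
\]
and establishing this is the content of the proof.

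The essential pointwise input is Newton's inequality. Via the Gauss equation $R=-n(n-1)+2\sigma_2(\kappa)$, the hypothesis $R\ge n(n-1)$ is equivalent to $\sigma_2(\kappa)\ge n(n-1)$, and combining with $\sigma_1(\kappa)^2\ge\tfrac{2n}{n-1}\sigma_2(\kappa)$ forces the pointwise bound $|\sigma_1(\kappa)|\ge n\sqrt 2$ on $M_u$. Hence the mean curvature never vanishes, so by connectedness $\sigma_1(\kappa)$ has constant sign. Suppose now, for contradiction, that $u(x_0)>1$ for some $x_0\in B_1$. Then $u$ attains an interior maximum $x^\ast\in B_1$ with $Du(x^\ast)=0$ and $D^2u(x^\ast)\le 0$. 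A direct computation in the upper half-space model shows that at any critical point of $u$ the shape operator of $M_u$ with respect to the upward unit normal $\nu=u\,\partial_{n+1}$ is $I+u\,D^2u$; hence $\kappa_i(x^\ast)=1+u(x^\ast)\lambda_i\le 1$ and $\sigma_1(\kappa)(x^\ast)\le n<n\sqrt 2$, which, combined with the constant-sign property, forces $\sigma_1(\kappa)\le -n\sqrt 2$ throughout $M_u$.

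To reach the desired contradiction I would compare $M_u$ with the model graph $u^\flat(x)=2-\sqrt{2-|x|^2}$, whose image is the lower cap of the Euclidean sphere $|x|^2+(x^{n+1}-2)^2=2$, a totally umbilic geodesic sphere in $\mathbb{H}^{n+1}$ of hyperbolic radius $\sinh^{-1}1$ with principal curvatures $+\sqrt 2$ for the upward normal. The hyperbolic dilations $(x,x^{n+1})\mapsto(\lambda x,\lambda x^{n+1})$ produce the family $u^\flat_\lambda(x)=2\lambda-\sqrt{2\lambda^2-|x|^2}$, $\lambda\ge 1$, of geodesic spheres of the same hyperbolic radius. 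For $\lambda$ large, $u^\flat_\lambda>u$ on $\bar{B}_1$; since $u^\flat_\lambda|_{\partial B_1}=2\lambda-\sqrt{2\lambda^2-1}>1=u|_{\partial B_1}$ for every $\lambda>1$, boundary tangency is excluded. Thus the smallest $\lambda_\ast\ge 1$ for which $u^\flat_{\lambda_\ast}\ge u$ on $\bar{B}_1$ produces an interior tangency point $p_\ast\in B_1$, and $w:=u^\flat_{\lambda_\ast}-u\ge 0$ with $w(p_\ast)=0$ satisfies a linear elliptic inequality $Lw\le 0$, where $L$ is the linearization of $\sigma_2(\kappa)-n(n-1)$ (uniformly elliptic near $p_\ast$ because all principal curvatures of $u^\flat_{\lambda_\ast}$ at $p_\ast$ equal $+\sqrt 2>0$). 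The strong maximum principle, iterated along the boundary of the coincidence set, then forces $u\equiv u^\flat_{\lambda_\ast}$ on $B_1$; but then $u|_{\partial B_1}=1$ contradicts $u^\flat_{\lambda_\ast}|_{\partial B_1}>1$. Hence $u\le 1$ and Theorem~\ref{th:hypkcvx} finishes the argument.

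The main obstacle I expect is the sliding-sphere/maximum-principle step: the linearization $L$ is uniformly elliptic only in the admissible positive cone, so one must verify that the comparison remains in this cone throughout the propagation from $p_\ast$ to the whole of $B_1$, and one must use only the $C^1(\bar B_1)$ regularity of $u$ to exclude boundary tangency and to control the behavior of $M_u$ as $p$ approaches $\partial B_1\times\{1\}$.
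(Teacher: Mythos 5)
Your overall strategy is genuinely different from the paper's. The paper proves Theorem~\ref{th:hyp2graph} by reducing the scalar-curvature hypothesis to a mean-curvature hypothesis: Proposition~\ref{pr:hypRH} converts $R\ge n(n-1)$ into $|H|\ge\sqrt 2\,n$, Proposition~\ref{pr:hypmodel} identifies the model, and Theorem~\ref{th:hypmeanPDE} then forces $u\equiv v$ by sliding geodesic spheres and applying the strong maximum principle to the quasilinear mean-curvature operator $H(u)$ (for which the linearization is always uniformly elliptic, with coefficient matrix $\tilde a^{ij}(\psi,D\psi)$ positive as long as $\psi=u>0$). You instead try to verify the hyperbolic incorporation condition so as to invoke Theorem~\ref{th:hypkcvx}, which requires the extra step of proving $u\le 1$; the paper never needs that step, since $u\le v\le 1$ emerges as a byproduct of the sliding argument.

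The step that does not go through is precisely the one you flagged as an obstacle: ruling out $u>1$ by sliding the model caps $u^\flat_\lambda$ and applying a strong maximum principle to the linearization of $\sigma_2(\kappa)$. In the contradiction case you have established $\sigma_1(\kappa^u)\le -\sqrt 2\,n<0$ everywhere (with respect to the upward normal), so $\kappa^u\notin\Gamma_2=\{\sigma_1>0,\ \sigma_2>0\}$. The coefficients of the linearized operator $L$ are built from integrals of $\partial\sigma_2/\partial A$ along the straight-line homotopy between $\kappa^{u^\flat_{\lambda_\ast}}=\sqrt 2\,(1,\dots,1)\in\Gamma_2$ and $\kappa^u\notin\Gamma_2$; this path crosses $\partial\Gamma_2$, where $\partial\sigma_2/\partial\kappa_i$ vanishes, and lies on the side where $\partial\sigma_2/\partial\kappa_i<0$, so the second-order part of $L$ is not positive semidefinite and the strong maximum principle is simply inapplicable. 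Observing that $u^\flat_{\lambda_\ast}$ itself is umbilic at the touching point does not help, because ellipticity of $L$ depends on the whole one-parameter family of intermediate curvature vectors, not just the endpoint. To close this, you would need to replace the $\sigma_2$-comparison by a comparison of operators that remain elliptic along the deformation; this is exactly what the paper's reduction to the mean-curvature operator accomplishes (the coefficient matrix $\tilde a^{ij}$ in Theorem~\ref{th:smax2} is positive definite whenever $z(\psi)=\psi>0$, with no cone restriction), and it is also the reason the paper emphasizes sliding with $H$ rather than $\sigma_2$. As a secondary remark, the sign dichotomy you derived from the interior maximum (that either $\sigma_1\ge \sqrt 2\,n$ throughout or $\sigma_1\le-\sqrt 2\,n$ throughout) is the right observation and is in fact the missing justification for the paper's otherwise-stated-without-proof assertion in Proposition~\ref{pr:hypRH} that $H\ge\sqrt 2\,n$; but your method of excluding the negative branch is not supported by the tools at hand.
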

Here the assumption on $u$ regarding boundary regularity is stronger than that in Theorem~\ref{th:MO2}. This is due to the difference of the geometry, which we will briefly mention below. As a remark, a more general statement of Theorem~\ref{th:hyp2graph} for the $k$-th mean curvature is proved in Section~\ref{se:Hy} (Theorem~\ref{th:hypkgraph}).

An important observation in this paper is that, we can reduce the problem of scalar curvature (or of $\sigma_k$ in general) to that of mean curvature, either by the Gauss equation, or by the Newton--Maclaurin inequalities (see \cite{HLP}, for example). The advantage is that the mean curvature operator is relatively easy to handle, especially in Euclidean space. 

The approach is relatively difficult in hyperbolic space. To begin with, one must figure out a \emph{model} geodesic sphere which plays the same role as the unit $n$-sphere in Euclidean space. Besides, the hyperbolic mean curvature operator is significantly different from that in Euclidean space. For example, the standard comparison principle (see, for example, Gilbarg--Trudinger~\cite[p. 263--267]{GT}) does \emph{not} apply to the hyperbolic mean curvature operator. We in fact construct a counter example in Example~\ref{ex:hypH}. 

Our main tool to derive the hyperbolic rigidity theorem is the strong maximum principles including the boundary point lemma. These principles are known for general quasilinear operators (see, for example, Serrin~\cite{Serrin} and Pucci--Serrin~\cite{PS}). But as indicated in Example~\ref{ex:hypH}, in the actual applications one has to be careful for different requirements on the ellipticity, regularity, and coefficients. For completeness, we state and prove the strong maximum principles in a form we need.

The rest of the paper is arranged as follows. In Section~\ref{se:Eu}, we prove the rigidity theorems for the hypersurfaces in Euclidean space. In Section~\ref{se:Hy}, we study the geometry of hypersurfaces in hyperbolic space, and prove the hyperbolic version of rigidity theorems.
Finally, we include in the Appendix the complete proof of the strong maximum principles for the mean curvature operators.

\begin{ack}
  Both the authors would like to thank Professor Rick Schoen for many helpful discussions. The second named author would also like to thank the warm hospitality of Stanford University, and the support of The Ohio State University at Newark. We would like to thank the referee for bringing our attention to some other recent results on rigidity such as \cite{BBEN} and \cite{BBN}.
\end{ack}

\section{Hypersurfaces in Euclidean Space} \label{se:Eu}

Let $M$ be a $C^2$ hypersurface in $\mathbb{R}^{n+1}$, and $A = (A_i^j)$ be the shape operator of $M$ with the eigenvalues $\kappa_i$ for $ 1 \le i \le n$ . We define the \emph{$k$-th mean curvature of $M$}, denoted by $\sigma_k(A)$ or $\sigma_k(\kappa)$, to be the $k$-th symmetric polynomial on $\kappa = (\kappa_1,\ldots,\kappa_n)$, i.e.,
\[
    \sigma_k(A) = \sigma_k(\kappa) = \sum_{1 \le i_1 < \cdots < i_k \le n} \kappa_{i_1}\cdots\kappa_{i_k}.
\]
In particular, if $M$ is smooth, $\sigma_1(\kappa)$, $2 \sigma_2(\kappa)$, and $\sigma_n(\kappa)$ are the mean curvature, the scalar curvature, and the Gauss--Kronecker curvature of $M$, respectively. Therefore, we can similarly define for a $C^2$ hypersurface, its mean curvature, scalar curvature, and Gauss--Kronecker curvature to be $\sigma_1(\kappa)$, $2 \sigma_2(\kappa)$, and $\sigma_n(\kappa)$, respectively. We say a $C^2$ hypersurface is \emph{$l$-convex}, $1 \le l \le n$, if its $j$-th mean curvature is positive for all $j = 1, \ldots, l$. 

In this section, we may interchangeably use $\sigma_1(A)$, $\sigma_1(\kappa)$, and $H_0$ to denote the mean curvature of a hypersurface. The notation $B_a$ stands for the open ball in $\mathbb{R}^n \times \{0\}$ of radius $a>0$ centered at the origin. Unless otherwise indicated, we always denote 
\[
    r = \sqrt{(x^1)^2 + \cdots + (x^n)^2}.
\]


Let us first proceed to prove Theorem~\ref{th:MO2}, in which the hypersurface is graphical. 
Let $u \in C^2(B_1)\cap C^0(\bar{B}_1)$ and $M_u$ be the graph of $u$ over $\bar{B}_1$ in $\mathbb{R}^{n+1}$, i.e.,
\[
		M_u = \{(x^1,\ldots, x^n, x^{n+1}) : x^{n+1} = u ( x^1, \dots, x^n ), \mbox{  for all } r \le 1 \}.
\]
 The mean curvature of $M_u$, with respect to the upward unit normal vector, is 
\[
		\sigma_1(\kappa) =  H_0 (u) = \sum_{i=1}^n \frac{\partial}{\partial x^i} \left(\frac{\partial u/\partial x^i}{\sqrt{1 + |Du|^2}} \right).
\]
Below a simple estimate of the total mean curvature is derived.
\begin{prop} \label{pr:meangraph}
  \[
     \left|\int_{B_1} H_0(u) dx \right| \le n \vol(B_1).
  \]
\end{prop}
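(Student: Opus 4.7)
The plan is to exploit the fact that the mean curvature operator is in divergence form. Writing
\[
   F = \frac{Du}{\sqrt{1+|Du|^2}},
\]
we have $H_0(u) = \dive(F)$, and crucially $|F| < 1$ at every point of $B_1$, uniformly in $u$. This pointwise bound on $F$ is the whole source of the estimate.

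For each $\rho < 1$, $u$ is $C^2$ on a neighborhood of $\overline{B_\rho}$, so the divergence theorem applies on $B_\rho$:
\[
   \int_{B_\rho} H_0(u)\, dx = \int_{\partial B_\rho} F\cdot \nu\, d\sigma,
\]
where $\nu$ is the outward unit normal of $\partial B_\rho$. Using $|F|<1$,
\[
   \left| \int_{B_\rho} H_0(u)\, dx \right| \le \vol(\partial B_\rho) = n\,\omega_n\, \rho^{n-1},
\]
where $\omega_n = \vol(B_1)$. Letting $\rho \to 1^-$, the right-hand side tends to $n\, \vol(B_1)$, which gives the desired bound.

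The one point of care is the passage to the limit on the left-hand side, since $u$ is only assumed $C^0$ (not $C^2$) up to $\partial B_1$, so $H_0(u)$ need not belong to $L^1(B_1)$ a priori. I would interpret $\int_{B_1} H_0(u)\, dx$ as $\lim_{\rho\to 1^-}\int_{B_\rho} H_0(u)\, dx$ (noting that in the rigidity applications, the hypotheses on $\sigma_k$ and the Newton--Maclaurin inequalities will ultimately force $H_0(u)\ge 0$, so the limit exists by monotone convergence and the estimate above is precisely the ingredient needed to see the integral is finite). In any case the uniform-in-$\rho$ bound $n\,\omega_n \rho^{n-1}$ persists in the limit. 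The main ``obstacle'' is really just this mild regularity/integrability issue at the boundary; the geometric content of the proposition is the trivial observation $|F|<1$ combined with the divergence theorem, and no further ingredient is needed.
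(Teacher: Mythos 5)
Your proof is correct and is essentially the same as the paper's: both apply the divergence theorem on $B_\rho$ for $\rho<1$, use the pointwise bound $|Du|/\sqrt{1+|Du|^2}<1$ to get $\left|\int_{B_\rho} H_0(u)\,dx\right|\le \vol(\partial B_\rho)$, and let $\rho\to 1^-$. Your extra remark about interpreting $\int_{B_1}$ as a limit (since $u$ is only $C^0$ up to $\partial B_1$) is a sensible clarification but does not change the argument.
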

\begin{proof}
It follows from the divergence theorem that, for any $0 < a < 1$,
\begin{align*}
   \left|\int_{B_a} H_0 (u) dx \right|
   & = \left|\int_{\partial B_a} \frac{D u \cdot (y/a)}{\sqrt{1 + |Du|^2}} d y \right|\\
   & \le \int_{\partial B_a} \frac{|D u|}{\sqrt{1 + |Du|^2}} dy \\
   & \le \vol( \partial B_a) = \frac{n}{a} \vol( B_a) .
\end{align*}
Letting $a$ tend to $1$ yields the result.
\end{proof}

\begin{pfthmMO2}
Let us invoke a useful identity:
\begin{equation} \label{eq:LanNewton}
   \left(\frac{H_0}{n}\right)^2 = \frac{\sigma_2(A)}{\binom{n}{2}}  + \frac{ |\mathring{A} |^2}{n(n-1)}.
\end{equation}
We denote by $\mathring{A}$ the trace-free part of $A$, i.e.,
\begin{equation} \label{eq:tf}
	( \mathring{A} )^j_i = A^j_i - \frac{H_0}{n} \delta^j_i.
\end{equation}
The identity \eqref{eq:LanNewton} follows immediately from substituting \eqref{eq:tf} into
\[
   H_0^2 - 2 \sigma_2(A) = |A|^2.
\]

Now if $2\sigma_2 ( A ) \ge n(n-1)$ everywhere, we have by \eqref{eq:LanNewton} that
\[
		H_0^2 \ge n^2 + \frac{n}{n-1}| \mathring{A} |^2 \ge n^2.
\]
Therefore, by continuity, we have either $H_0 \ge n$ everywhere, or $H_0 \le -n$ everywhere.
But in view of Proposition~\ref{pr:meangraph}, we obtain  in fact an identity in either case. Applying \eqref{eq:LanNewton} again yields that $\mathring{A} \equiv 0$, i.e., $M_u$ is totally umbilic with all principal curvatures identically equaling $1$. Therefore, $M_u$ is isometric to the hemisphere $\mathbb{S}^n_+$. 
\qed
\end{pfthmMO2}

Next, we would like to prove Theorem~\ref{th:kcvx}. Recall that $\mathcal{C}_+$ is the upper solid hemicylinder given by
\[
   \mathcal{C}_+ 
		= \{ (x^1, \dots, x^{n+1} ) \in \mathbb{R}^{n+1}: r \le 1 \mbox{ and } x^{n+1} > 0 \}.
\]
Let $M$ be a $C^2$ hypersurface with boundary $\partial M$. We say $M$ satisfies the incorporation condition, if $M$ has the following three conditions
\begin{enumeratei}
  \item   $\partial M$ is diffeomorphic to $\mathbb{S}^{n-1}$.
  \item  $\partial M \subset \mathbb{R}^n \times \{0\}$, and $B_1$ is contained in the region enclosed by $\partial M$ in $\mathbb{R}^n \times \{0\}$.
  \item  $M \cap \mathcal{C}_+ = \emptyset$.
\end{enumeratei}


The following lemma settles the mean curvature case in Theorem~\ref{th:kcvx}. 
 
\begin{lemma} \label{le:mean}
Let $M \subset \mathbb{R}^{n+1}$ be a compact hypersurface with boundary $\partial M$. Assume that $M$ satisfies the incorporation condition. If the mean curvature of $M$ satisfies that $|H_0| \ge n$ everywhere, then $M$ is isometric to the hemisphere $\mathbb{S}^n_+$. 
\end{lemma}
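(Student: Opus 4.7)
The plan is a moving-ball argument paired with the strong maximum principle for the mean-curvature equation. First, continuity of $H_0$ together with connectedness of $M$ allows me to assume, after globally flipping $\nu$ if necessary, that $H_0 \ge n$ on all of $M$. For each $t \in \mathbb{R}$ let $\bar{B}_t$ denote the closed unit ball centered at $(0,\ldots,0,t) \in \mathbb{R}^{n+1}$. For $t > 1$ one has $\bar{B}_t \subset \overline{\mathcal{C}_+}$, and combined with the incorporation conditions (ii)--(iii) (interpreted strictly, so that $\partial M \cap \overline{B_1} = \emptyset$) this gives $\bar{B}_t \cap M = \emptyset$. Set $t_0 := \sup\{t \in \mathbb{R} : \bar{B}_t \cap M \ne \emptyset\}$ and decrease $t$ from $+\infty$ toward $t_0$.

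Two preliminaries are required. For \emph{existence of a touching ball}, i.e.\ $t_0 > -\infty$: were no $\bar{B}_t$ to meet $M$, then $M$ would lie strictly outside the solid cylinder $\{r \le 1\}$, and at a point $p^*\in M$ minimizing $r|_M$ (so $r(p^*) > 1$), the comparison of $M$ with the tangent vertical cylinder $\{r = r(p^*)\}$ --- which has mean curvature $(n-1)/r(p^*)$ with respect to the normal pointing to the axis --- would yield $H_0(p^*) \le (n-1)/r(p^*) < n$, contradicting $H_0 \ge n$. The globally chosen $\nu$ is identified at $p^*$ as pointing toward the axis by the convention that the unit sphere has $H_0 = +n$ under the inward normal, together with the fact that the region $\Omega$ enclosed by $M\cup D$ (where $D$ is the planar disk bounded by $\partial M$) meets the axis. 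For \emph{interior tangency}: any $p \in \bar{B}_{t_0} \cap M$ lies on $\partial B_{t_0}$ (otherwise $p$ would belong to $B_t$ for some $t > t_0$); and if $p \in \partial M$, then $|p_{\mathrm{horiz}}|^2 + t_0^2 = 1$ together with $|p_{\mathrm{horiz}}| \ge 1$ forces $|p_{\mathrm{horiz}}| = 1$ and $t_0 = 0$, which is excluded by $\partial M \cap \overline{B_1} = \emptyset$. Hence $p$ is an interior point of $M$.

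At $p$, write $M$ and $\partial B_{t_0}$ locally as graphs $z = f_M(x)$ and $z = f_{\mathrm{sph}}(x)$ over their common tangent hyperplane, with the normal coordinate $z$ pointing into the ball. The condition $B_{t_0}^\circ \cap M = \emptyset$ yields $f_M \le f_{\mathrm{sph}}$ near $0$ with first-order agreement at $0$, so $\kappa_i^M(p) \le 1$ with respect to the into-the-ball direction. Under the global orientation this direction coincides with $\nu_M(p)$ (matching the model lower hemisphere, whose inward normal points to the center of the tangent unit ball). Combined with $H_0(p) \ge n$ the inequalities $\sum_i \kappa_i^M(p) \le n$ force $\kappa_i^M(p) = 1$ for every $i$. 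The difference $w := f_{\mathrm{sph}} - f_M \ge 0$ satisfies a uniformly elliptic linear PDE obtained by subtracting the mean-curvature equations of the two graphs along a straight-line homotopy, and the strong maximum principle yields $w \equiv 0$ in a neighborhood of $0$. Hence $M$ locally coincides with $\partial B_{t_0}$.

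A standard open-and-closed argument --- the unit sphere locally tangent to $M$ is uniquely determined by the 2-jet of $M$, which varies continuously --- then shows $M$ is contained in a single unit sphere $S$. The enclosing condition in (ii) forces $S$ to be centered at the origin (an off-center unit sphere would meet the plane in a round sphere either too small or too off-center to enclose $B_1$), and condition (iii) then identifies $M$ as the lower hemisphere $\mathbb{S}^n_-$, isometric to $\mathbb{S}^n_+$. The main obstacle is the orientation bookkeeping: upgrading $|H_0| \ge n$ to a signed inequality via connectedness, and then verifying that the global sign aligns with the into-the-ball direction at the first touching point so that the maximum-principle comparison actually delivers umbilicity. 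A secondary issue is a possible tangency along $\partial M$ if (ii) is interpreted weakly (allowing $\partial M \cap \partial B_1 \ne \emptyset$, forcing $t_0 = 0$), which is handled by invoking the Hopf boundary-point lemma proved in the Appendix in place of the interior strong maximum principle.
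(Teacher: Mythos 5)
Your overall strategy---slide the unit sphere centered on the $x^{n+1}$-axis downward to first contact, apply the strong maximum principle (Theorem~\ref{th:smax2}) to show $M$ coincides locally with the touching sphere, and then use the incorporation conditions to identify the sphere and conclude rigidity---is the same as the paper's. Your preliminary on the \emph{existence} of a touching moment $t_0$ via a tangent-cylinder comparison is a reasonable addition (the paper assumes without comment that the descending sphere eventually meets $M$), and your interior-tangency and open-and-closed steps match the paper's assertion argument.

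The genuine gap is in the boundary-tangency case, which is the delicate part of the lemma. When $t_0 = 0$ and $S(0) \cap M \subset \partial M$, you write that this is ``handled by invoking the Hopf boundary-point lemma in place of the interior strong maximum principle.'' But the boundary lemma does \emph{not} play the same role as the interior strong maximum principle here: it only yields a strict normal-derivative inequality, which by itself gives no contradiction, nor does it force $M$ to coincide with $S(0)$. The paper's resolution is: write $M$ and $S(0)$ as graphs $\psi$, $\varphi$ over $T_{x_0}\mathit{C}$ at a contact point $x_0 \in \partial M \cap \partial B_1$; part~\eqref{th:smax2:2} of Theorem~\ref{th:smax2} gives $\partial(\psi - \varphi)/\partial x^{n+1}(x_0) > 0$; since $S(0)$ and the cylinder $\mathit{C}$ are tangent at $x_0$, the same strict inequality holds against the graph $\bar\varphi$ of $\mathit{C}$, and therefore $M$ lies strictly outside $\mathit{C}$ on a strip $\{-\delta < x^{n+1} < 0\}$. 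Combined with condition \eqref{de:incp:3} and compactness, this shows $S(q)$ misses $M$ for all $q \in (-\epsilon, 0)$, so the sphere can be pushed past $q = 0$ until an \emph{interior} tangency occurs, and the earlier assertion then gives the contradiction. This ``escape-the-cylinder-and-keep-descending'' argument is the key mechanism and is absent from your sketch. Relatedly, your initial strict reading of condition~\eqref{de:incp:2}, requiring $\partial M \cap \overline{B_1} = \emptyset$, cannot be correct as it excludes the rigidity configuration $\partial M = \partial B_1$; you walk this back at the end, but the case you retreat to is precisely where the missing argument is needed.
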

\begin{proof}
Without loss of generality, we can assume that $H_0\ge n$ with respect to a non-vanishing unit normal vector field $\nu$ on $M$. More precisely, $\nu$ is the inward unit normal vector if we ``close up'' $M$ by adding the flat region enclosed by $\partial M$ in $\mathbb{R}^n \times \{ 0 \}$. Let $S(q)$ be the unit $n$-sphere in $\mathbb{R}^{n+1}$ centered at $(0,\ldots, 0,q)$, for each $q \in \mathbb{R}$. Since $M$ is compact, we can start with a very large $q$ so that $S(q)$ has no intersection with $M$. Then, we continuously decreases $q$ until $S(q)$ begins to intersect $M$. We denote $q = q_0$ for this moment.

We \emph{assert} that if $S(q_0)$ is tangent to $M$ at an interior point, then $M$ must be a portion of $S(q_0)$, and hence, $\partial M = \partial B_1$ by the incorporation condition; therefore $M$ is isometric to the hemisphere. 
Indeed, let $V = M \cap S(q_0)$. Obviously $V$ is a nonempty closed subset in $M$. If $V \neq M$, then there exists an interior point $p$ of $M$ such that $p \in \partial V$. 
Locally near $p$, both $S(q_0)$ and $M$ can be written as graphs over $T_p M$. 
Note that the mean curvature of the graph of $S(q_0)$ is equal to $n$ (instead of $-n$) with respect to the unit normal $\nu$. This is guaranteed by the incorporation condition \eqref{de:incp:2} and \eqref{de:incp:3}. Applying part~\eqref{th:smax2:1} of Theorem~\ref{th:smax2} yields that $M$ must coincide with $S(q_0)$ over the small neighborhood of $p$. This contradicts the assumption $p \in \partial V$. Hence, $V = M$ and the assertion is proved. 

The assertion will imply that $\partial M \cap \partial B_1 \neq \emptyset$, and $q_0 = 0$. Suppose that $\partial M \cap \partial B_1 = \emptyset$. Then by the incorporation condition $S(q_0)$ has to tangent to $M$ at some interior point. By the assertion $M$ is a portion of $S(q_0)$ and $\partial M = \partial B_1$. It is a contradiction. Thus, we have $q_0 \ge 0$. If $q_0 > 0$, then again $S(q_0)$ must intersect $M$ at the interior, by the incorporation \eqref{de:incp:3}. We arrive a contradiction again by the assertion.

It remains to rule out the case that $S(0)$ intersects $M$ only at $\partial M$. For any $x_0 \in \partial M \cap \partial B_1$, we can locally write $M$ and $S(0)$ as graphs $\psi$ and $\varphi$ (with $\varphi \ge 0$) over $T_{x_0} \mathit{C}$, respectively. Here $\mathit{C} $ is the cylinder
\begin{equation} \label{eq:defCy}
   \mathit{C} = \{ (x^1, \ldots, x^{n+1}) \in \mathbb{R}^{n+1} \mid r = 1 \}.
\end{equation}
We have by the second part of Theorem~\ref{th:smax2} that
\[
    \frac{\partial (\psi - \varphi)}{\partial x^{n+1}}(x_0) > 0.
\]
Let us also write $\mathit{C}$ locally as the graph $\bar{\varphi}$ over $T_{x_0} \mathit{C}$. Because $S(0)$ is tangent to $\mathit{C}$ at $x_0$, $\bar{\varphi}$ and  $\varphi$ have the same derivatives in the direction of $\partial/\partial x^{n+1}$, and therefore,
\[
    \frac{\partial (\psi - \bar{\varphi})}{\partial x^{n+1}}(x_0) = \frac{\partial (\psi - \varphi)}{\partial x^{n+1}}(x_0) > 0. 
\]
This implies that $\psi(x) < \bar{\varphi}(x)$ for any $x = x_0 - (0,\ldots,0, t)$ with $t > 0$ small. This holds for any $x_0 \in \partial M \cap \partial B_1$. Hence, there exists a small constant $\delta >0$ such that
\[
   M \cap \mathcal{C}_{-\delta} = \emptyset,
\]
in which
\[
		\mathcal{C}_{-\delta} = \{ (x^1, \dots, x^{n+1} ) \in \mathbb{R}^{n+1}: r \le 1 \mbox{ and } - \delta < x^{n+1} < 0 \}.
\]
Then, there exists a small constant $\epsilon > 0$ such that $S(q)$ has no intersection with $M$ for any $- \epsilon < q < 0$. Thus, we can continuously decrease $q$ until $S(q)$ is tangent to some interior point of $M$. We get a contradiction by the previous assertion. This completes the proof.
\end{proof}

\begin{pfthmkcvx}
It remains to show the theorem for $k \ge 2$. If $k =2$, by \eqref{eq:LanNewton} we have
\[
		| H_0 | \ge n.
\]
The result then follows immediately from Lemma~\ref{le:mean}. 
Assume for some $k \ge 3$ that $M$ is $k$-convex and $\sigma_k (\kappa) \ge \binom{n}{k}$. Recall Maclaurin's inequality states that, 
\begin{equation} \label{eq:Mac}
   \left[ \frac{ \sigma_k (\lambda)  }{ \binom{n}{k} } \right]^{1/k}\le \left[ \frac{ \sigma_{k-1} (\lambda)  }{ \binom{n}{k -1} } \right]^{1/(k -1)} \le \cdots \le \frac{ \sigma_1(\lambda) }{n},
\end{equation}
for any $\lambda = (\lambda_1, \cdots, \lambda_n) \in \mathbb{R}^n$ with $\sigma_j (\lambda) > 0$ for all $j = 1, \ldots, k$. 
It follows that 
\[
     H_0 = \sigma_1 ( \kappa) \ge n \left[ \frac{\sigma_k(\kappa)}{\binom{n}{k}} \right]^{1/k} \ge n 
\]
everywhere on $M$. Thus, the result is implied by Lemma~\ref{le:mean}.
\qed
\end{pfthmkcvx}

If $M$ is graphical, then the incorporation condition in Theorem~\ref{th:kcvx} can be dropped. 
\begin{theorem} \label{th:kcvxgraph}
  Let $u \in C^2(B_1)\cap C^0(\bar{B}_1)$, and $M_u$ be the graph of $u$ over $\bar{B}_1$ in $\mathbb{R}^{n+1}$. 
  Assume that $M_u$ is $k$-convex for some integer $3 \le k \le n$, and
  \[
     \sigma_k \ge \binom{n}{k}.
  \]
   Then $M_u$ is isometric to $\mathbb{S}^n_+$.
\end{theorem}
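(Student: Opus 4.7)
The plan is to combine the Newton--Maclaurin reduction used at the end of the proof of Theorem~\ref{th:kcvx} with the divergence-theorem identity argument that proved Theorem~\ref{th:MO2}. The key point is that Proposition~\ref{pr:meangraph}, being a purely graphical statement, does not require the incorporation condition and remains available here; once we use Maclaurin to push $\sigma_k$ down to a statement about $H_0$, the graphical proof of Theorem~\ref{th:MO2} essentially applies verbatim.

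First I would invoke Maclaurin's inequality \eqref{eq:Mac} exactly as at the end of the proof of Theorem~\ref{th:kcvx}: under the hypotheses $\sigma_k(\kappa)\ge\binom{n}{k}$ and $k$-convexity,
\[
   H_0 = \sigma_1(\kappa)\ge n\left[\frac{\sigma_k(\kappa)}{\binom{n}{k}}\right]^{1/k}\ge n
\]
at every point of $M_u$. Note that $k$-convexity forces $\sigma_1>0$, so the negative branch $H_0\le -n$ that had to be treated separately in Theorem~\ref{th:MO2} is automatically excluded here. Next, Proposition~\ref{pr:meangraph} applied to $M_u$ gives the reverse estimate $\int_{B_1}H_0(u)\,dx\le n\,\vol(B_1)$. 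Combined with the pointwise lower bound $H_0\ge n$, this forces $\int_{B_1}H_0\,dx = n\,\vol(B_1)$, and then continuity of $H_0$ yields $H_0\equiv n$ on $B_1$.

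To close the argument I would plug $H_0\equiv n$ into the identity \eqref{eq:LanNewton} to obtain
\[
    1 = \frac{\sigma_2(A)}{\binom{n}{2}}+\frac{|\mathring{A}|^2}{n(n-1)}.
\]
A second application of Maclaurin's inequality (with exponents $2$ and $k$, again using $k$-convexity) gives $\sigma_2(A)/\binom{n}{2}\ge [\sigma_k(\kappa)/\binom{n}{k}]^{2/k}\ge 1$, forcing $|\mathring{A}|^2\equiv 0$. Hence $M_u$ is totally umbilic with all principal curvatures equal to $H_0/n=1$, so it lies on a unit sphere; being a connected graph over $\bar{B}_1$ with $\bar{B}_1$ of radius $1$, the only possibility is that $M_u$ coincides with a hemisphere of that sphere, which is isometric to $\mathbb{S}^n_+$.

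I do not anticipate a substantive obstacle, since the argument is a direct fusion of two ingredients already developed in this section. The only mild care point is the observation that $k$-convexity eliminates the $H_0\le -n$ branch for free, whereas in the proof of Theorem~\ref{th:MO2} both signs had to be handled (both leading to the same conclusion by the symmetry of \eqref{eq:LanNewton}).
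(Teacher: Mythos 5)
Your proof is correct, and its ingredients coincide with the paper's: Maclaurin's inequality, Proposition~\ref{pr:meangraph}, and the identity~\eqref{eq:LanNewton}. The paper's own proof is, however, a two-line reduction: by Maclaurin, $k$-convexity and $\sigma_k\ge\binom{n}{k}$ imply $2\sigma_2\ge n(n-1)$, after which Theorem~\ref{th:MO2} is cited directly. You instead reduce one level further, to $H_0\ge n$, and then inline the proof of Theorem~\ref{th:MO2} (Proposition~\ref{pr:meangraph} plus the trace-free identity), needing a second application of Maclaurin at the end to recover $\sigma_2/\binom{n}{2}\ge 1$ when you kill $\mathring A$. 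Both routes are valid; the paper's is more economical because it notices that Theorem~\ref{th:MO2} already packages the $H_0$-level argument, whereas yours is self-contained at the cost of repeating it. Your observation that $k$-convexity dispenses with the negative branch $H_0\le -n$ is a correct and pleasant simplification that the paper's route does not need to mention, since Theorem~\ref{th:MO2} handles both signs.
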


\begin{pfthmkcvxgraph}
For $3 \le k \le n$, Maclaurin's inequality implies that
\[
   2 \sigma_2(\kappa) \ge n(n-1) \left[\frac{\sigma_k(\kappa)}{\binom{n}{k}} \right]^{1/k} \ge n(n-1).
\]
The result then follows from Theorem~\ref{th:MO2}.
\qed
\end{pfthmkcvxgraph}

\section{Hypersurfaces in Hyperbolic Space} \label{se:Hy}

We consider the upper half--space model for the hyperbolic space $\mathbb{H}^{n+1}$ with the metric $x_{n+1}^{-2} \delta_{ij}$. Let $M$ be a $C^2$ hypersurface in $\mathbb{H}^{n+1}$ and $A = ( A^j_i )$ be the hyperbolic shape operator. Similar to Euclidean space, we define the \emph{hyperbolic $k$-th mean curvature} of $M$ to be the $k$-th symmetric functions on $A$. As before, $M$ is called \emph{$k$-convex}, if $\sigma_j(A)>0$ for all $1 \le j \le k$. Note that $\sigma_1(A)$ is equal to the hyperbolic mean curvature $H$. 
In contrast to the Euclidean case, the scalar curvature of $M$ induced from $\mathbb{H}^{n+1}$ is defined to be  
\begin{equation} \label{eq:hypR2}
   R = 2 \sigma_2(A) - n (n-1).
\end{equation}
(In the following, we call the induced scalar curvature of a hypersurface in $\mathbb{H}^{n+1}$ the \emph{hyperbolic scalar curvature}.)

The reason is due to the Gauss equation. More precisely, assume that $M$ is a smooth (or at least $C^3$) hypersurface in $\mathbb{H}^{n+1}$. 
For any $p \in M$, we choose an orthonormal basis $\{e_i\}_{i=1}^n$ of $T_p M$. Denote by $\bar{R}_{ijkl}$ and $R_{ijkl}$, respectively, the Riemann curvature tensors of $\mathbb{H}^{n+1}$ and $M$ at $p$ with respect to $\{ e_i\}_{i=1}^n$. Then, by the Gauss equation,
\begin{align} \label{eq:gauss}
		\bar{K}_{ij} = \bar{R}_{ijji} = R_{ijji} - A_{ii}A_{jj} + A_{ij}A_{ij},
\end{align}
where $\bar{K}_{ij}$ is the sectional curvature of $\mathbb{H}^{n+1}$ at $p$. 
Recall that
\[
   \bar{K}_{ij} = - 1 + \delta_{ij}, \qquad \textup{for all $1 \le i, j \le n$}.
\]
Summing \eqref{eq:gauss} over all $i$, $j = 1, \ldots, n$ 
yields that
\begin{equation} \label{eq:GaussR}
		- n(n-1) = R - H^2 + | A|^2 
\end{equation}
where $R$ and $H$ are, respectively, the hyperbolic scalar curvature and mean curvature of $M$. 
On the other hand, we know that
\[
   2 \sigma_2(A) = H^2 - |A|^2. 
\]
This combining \eqref{eq:GaussR} yields that
\[
   R = 2 \sigma_2(A) - n(n-1).
\]

The following simple proposition relates the hyperbolic scalar curvature to the hyperbolic mean curvature.
\begin{prop} \label{pr:hypRH}
   Let $M$ be a $C^2$ hypersurface in $\mathbb{H}^{n+1}$. Denote by $\mathring{A}$ the trace-free part of $A$. Then,
   \begin{equation} \label{eq:hypRH}
      \left(\frac{H}{n}\right)^2 = \frac{| \mathring{A}|^2}{n(n-1)} + \frac{R}{n(n-1)} + 1.
   \end{equation}
   As a consequence, if $R \ge n(n-1)$, then
   \[
        H \ge \sqrt{2}n,
   \]
   where the equality holds if and only if $M$ is totally umbilic with all the principal curvatures identically equaling $\sqrt{2}$.
\end{prop}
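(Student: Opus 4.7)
The plan is to derive the identity \eqref{eq:hypRH} as a direct algebraic consequence of the definition of the trace-free part together with the hyperbolic Gauss identity \eqref{eq:hypR2} just recorded. The inequality $H \ge \sqrt{2}\,n$ then drops out by discarding nonnegative terms, and the equality case is read off from the identity itself.

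Concretely, the first step is to observe that writing $\mathring{A} = A - (H/n)I$ and taking the squared Frobenius norm gives
\[
   |\mathring{A}|^2 = |A|^2 - \frac{H^2}{n}.
\]
Next, I would combine the elementary identity $2\sigma_2(A) = H^2 - |A|^2$ with \eqref{eq:hypR2} (which the paper has already deduced from the Gauss equation) to solve for $|A|^2$, obtaining $|A|^2 = H^2 - R - n(n-1)$. Substituting this into the previous display yields
\[
   |\mathring{A}|^2 = \frac{n-1}{n}\,H^2 - R - n(n-1),
\]
and dividing through by $n(n-1)$ and rearranging produces exactly \eqref{eq:hypRH}.

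For the consequence, suppose $R \ge n(n-1)$. Since $|\mathring{A}|^2 \ge 0$, the identity \eqref{eq:hypRH} immediately forces $H^2/n^2 \ge 2$, hence $|H| \ge \sqrt{2}\,n$; under the orientation convention fixed in the paper (mean curvature taken with respect to the inward normal, so that the relevant model sphere has positive mean curvature) this gives $H \ge \sqrt{2}\,n$. If equality holds, then \eqref{eq:hypRH} forces both $|\mathring{A}|^2 = 0$ and $R = n(n-1)$, so that $A = (H/n)I = \sqrt{2}\,I$, meaning $M$ is totally umbilic with every principal curvature equal to $\sqrt{2}$; the converse is a trivial substitution. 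There is no real obstacle here: the proposition is a bookkeeping identity plus a one-line application of $|\mathring{A}|^2 \ge 0$, and the only point requiring any care is tracking the sign of $H$ against the orientation convention already in force.
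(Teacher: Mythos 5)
Your proof is correct and follows essentially the same route as the paper's: both derive \eqref{eq:hypRH} by combining $|A|^2 = |\mathring{A}|^2 + H^2/n$ with $R + n(n-1) = 2\sigma_2(A) = H^2 - |A|^2$, and the consequence and its equality case drop out from $|\mathring{A}|^2 \ge 0$ and $R \ge n(n-1)$. Your explicit remark about choosing the orientation so that $H$ is positive is a sensible point that the paper leaves implicit.
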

\begin{proof}
   The identity \eqref{eq:hypRH} follows immediately from substituting 
   \[
       |A|^2 = | \mathring{A} |^2 + \frac{H^2}{n}
   \]
   into
   \[
       R + n(n-1) = 2 \sigma_2(A) = H^2 - |A|^2. 
   \]
\end{proof}

Let us now look at the graph case. Let $u = u(x^1, \dots, x^n) \in C^2 (B_1)\cap C^0(\bar{B}_1)$. Throughout this section, we denote for $\delta > 0$,
\[
		B_{\delta} = \{ (x^1, \dots, x^n, 0) \in \mathbb{R}^{n+1} \mid r < \delta\},
\]
and
\[
    r = |x| = \sqrt{(x^1)^2 + \cdots (x^n)^2}.
\]
Let $M_u$ be the graph of $u$ over $\bar{B}_1$, i.e.,
\[
		M_u = \{(x^1,\ldots, x^n, x^{n+1}) \mid x^{n+1} = u ( x^1, \dots, x^n ), \mbox{  for all } r \le 1 \}.
\]
Notice that if $u=1$ on $\partial B_1$, then $\partial M_u$ with the induced metric from $\mathbb{H}^{n+1}$ is isometric to the unit $(n-1)$--sphere $\mathbb{S}^{n-1}$. The mean curvature of $M_u$, with respect to the upward unit normal vector, is 
\begin{equation} \label{eq:hypH}
		H(u) = \frac{ n }{ \sqrt{ 1 + | Du |^2 } } + u \sum_{i=1}^n \frac{ \partial } { \partial x^i } \left( \frac{ \partial u/\partial x^i }{ \sqrt{ 1 + | D u |^2 } }  \right).
\end{equation}


A geodesic sphere in $\mathbb{H}^{n+1}$ is an Euclidean sphere which is contained in $\mathbb{H}^{n+1}$. The hyperbolic mean curvature of a geodesic sphere is given by
\[
		H = \frac{q}{a} n, 
\]
where $q$ is the height of the center and $a$ is the radius. Among all the hyperbolic geodesic spheres which pass through $\partial B_1 \times \{1\}$, the one of radius $\sqrt{2}$ centered at $(0,\ldots,0,2)$ has the maximum mean curvature, which is equal to $\sqrt{2}n$. We call this geodesic sphere the \emph{model sphere}. Let 
\[
	v = 2 - \sqrt{ 2 - r^2 }, \qquad \textup{for all $r \le 1$},
\]
and $M_v$ be the graph of $v$. Then, $M_v$ is the portion of the model sphere. By abuse of language, we also refer $M_v$ (or $v$) as the model sphere.

\begin{prop} \label{pr:hypmodel}
  The model sphere $M_v$, endowed with the induced metric, is isometric to the hemisphere $\mathbb{S}^n_+$.
\end{prop}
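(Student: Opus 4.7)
The plan is to identify $M_v$ as half of a hyperbolic geodesic sphere whose intrinsic radius equals $1$, and then to identify $\partial M_v$ as an equator of that intrinsic unit $n$-sphere. The model sphere is, by construction, a Euclidean $n$-sphere of radius $\sqrt{2}$ centered at $(0,\ldots,0,2)$, and it sits entirely inside the upper half-space. It is classical that every Euclidean sphere lying entirely in $\mathbb{H}^{n+1}$ is a hyperbolic geodesic sphere; a direct calculation gives that the hyperbolic center is $(0,\ldots,0,\sqrt{2})$ and the hyperbolic radius $\rho$ satisfies $\cosh\rho = 2/\sqrt{2} = \sqrt{2}$, hence $\sinh\rho = 1$.

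Next I would invoke (or re-derive via the Jacobi equation on $\mathbb{H}^{n+1}$) the standard fact that a geodesic sphere of radius $\rho$ in a space of constant curvature $-1$ is totally umbilic with principal curvatures $\coth\rho$ and induced metric $\sinh^2(\rho)\, g_{\mathbb{S}^n}$. In our case $\coth\rho = \sqrt{2}$, which agrees with the definition of the model sphere, and $\sinh^2\rho = 1$, so the full geodesic $n$-sphere is isometric to the round unit $\mathbb{S}^n$. This is also consistent with the Gauss equation: an umbilic hypersurface in $\mathbb{H}^{n+1}$ with principal curvatures $\sqrt{2}$ has sectional curvature $-1 + \sqrt{2}\cdot\sqrt{2} = 1$.

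To pin down that $M_v$ is the upper hemisphere rather than some other portion, note that $\partial M_v = \partial B_1\times\{1\}$ lies in the horizontal slice $\{x^{n+1}=1\}$, where the hyperbolic metric coincides with the Euclidean one; therefore the induced metric on $\partial M_v$ is precisely the standard unit $\mathbb{S}^{n-1}$ metric. Inside an abstract unit $\mathbb{S}^n$, a round $(n-1)$-subsphere whose intrinsic metric is the full unit $\mathbb{S}^{n-1}$ must be a great subsphere (any non-equatorial round subsphere has strictly smaller intrinsic radius). Hence $\partial M_v$ is an equator of the geodesic sphere and splits it into two isometric hemispheres, one of which is $M_v$.

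A fully self-contained alternative is to compute the induced metric explicitly in the parametrization $x=(\sqrt{2}\sin\phi\,\theta,\,2-\sqrt{2}\cos\phi)$ with $(\phi,\theta)\in[0,\pi/4]\times\mathbb{S}^{n-1}$ (so $\phi=0$ is the south pole of the Euclidean sphere and $\phi=\pi/4$ recovers $\partial M_v$); the hyperbolic metric pulls back to
\[
\frac{2}{(2-\sqrt{2}\cos\phi)^2}\bigl[d\phi^2+\sin^2\phi\,g_{\mathbb{S}^{n-1}}\bigr],
\]
and the substitution
\[
\sin\psi=\frac{\sqrt{2}\sin\phi}{2-\sqrt{2}\cos\phi},\qquad \cos\psi=\frac{2\cos\phi-\sqrt{2}}{2-\sqrt{2}\cos\phi},
\]
under which $\phi\in[0,\pi/4]$ becomes $\psi\in[0,\pi/2]$, transforms it into $d\psi^2+\sin^2\psi\,g_{\mathbb{S}^{n-1}}$, the standard hemisphere metric. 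The main obstacle, such as it is, lies in this bookkeeping: verifying that the change of variables is a bona fide reparametrization and that the conformal factor simplifies precisely to the round metric. The first approach sidesteps this at the cost of quoting classical space-form facts, so I would write up the short argument via the geodesic-sphere identification and defer the direct computation to a remark.
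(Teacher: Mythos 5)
Your proof is correct, and it takes a slightly different route than the paper's. The paper works intrinsically on $M_v$: it notes that $M_v$ is totally umbilic with principal curvatures $\sqrt{2}$, applies the Gauss equation to get constant sectional curvature $1$, and then concludes from simple connectedness and $\partial M_v \cong \mathbb{S}^{n-1}$ (unit) that $M_v$ is the hemisphere, leaving the developing-map step tacit. You instead enlarge to the \emph{full} Euclidean sphere of radius $\sqrt{2}$ centered at $(0,\ldots,0,2)$, recognize it as a hyperbolic geodesic sphere with $\cosh\rho=\sqrt{2}$, $\sinh\rho=1$, quote the space-form formula for geodesic spheres (induced metric $\sinh^2\rho\,g_{\mathbb{S}^n}=g_{\mathbb{S}^n}$, principal curvatures $\coth\rho=\sqrt{2}$), and then locate $\partial M_v$ as an equator because it is a round subsphere of intrinsic radius $1$ inside a unit $\mathbb{S}^n$. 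What this buys you is that the final ``pin down the hemisphere'' step becomes elementary spherical geometry rather than an appeal to the developing map for constant-curvature manifolds with boundary; the cost is that you must identify the hyperbolic center and radius of the geodesic sphere and invoke the geodesic-sphere classification, both of which the paper sidesteps. Your explicit change of variables $\sin\psi = \sqrt{2}\sin\phi/(2-\sqrt{2}\cos\phi)$ checks out and would serve as a fully self-contained alternative.
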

\begin{proof}
  First, notice that $M_v$ is totally umbilic with all the principle curvatures identically equaling $\sqrt{2}$. 
  Let $\{e_i\}_{i=1}^n$ be an orthonormal basis under which $A$ is diagonalized. Since $M_v$ is smooth, we can apply the Gauss equation \eqref{eq:gauss} to obtain that
  \[
		-1 = \bar{K}_{ij} = K_{ij} - 2, \qquad \textup{for all $1 \le i , j \le n$ and $i \ne j$},
	\]
	where $K_{ij}$ is the sectional curvature of $M_v$. Thus, $M_v$ has constant sectional curvature $1$. Moreover, $M_v$ is simply connected, and $\partial M_v = \partial B_1 \times \{1\}$ which is isometric to $\mathbb{S}^{n-1}$. Therefore, we conclude that $M_v$ is isometric to $\mathbb{S}^n_+$.
\end{proof}

Before proving the theorems, let us remark that the standard comparison principle (see, for example, Gilbarg--Trudinger~\cite[p. 263--267]{GT}) does \emph{not} apply to the hyperbolic mean curvature operator $H(u)$. One reason is that the second order coefficients in $H$ depend on $u$. See below for a counter example.
\begin{ex} \label{ex:hypH}
  Let us compare the model sphere $v$ with the following two functions:
\[
   u_1 \equiv 1 \quad \textup{and \quad $u_2 = 1 + \varepsilon - \sqrt{1 + \varepsilon^2 - r^2}$, \quad for all $r \le 1$},
\]  
where $0 < \varepsilon < 1/2$. Observe that $u_1, u_2 \in C^{\infty}(\bar{B}_1)$ with
\[
   u_1\big|_{\partial B_1} = u_2 \big|_{\partial B_1} = 1 = v\big|_{\partial B_1},
\]
and that 
\[
   H (u_1) = n < \sqrt{2}n = H(v), \quad \textup{and  \quad $H(u_2) = n \frac{(1+\varepsilon)}{\sqrt{1 + \varepsilon^2}} < \sqrt{2}n$.}
\]
Nevertheless, we have
\[
   u_1 \ge v \ge u_2 \qquad \textup{on $\bar{B}_1$}.
\]
\end{ex}



Now we would like to show the rigidity theorem for the mean curvature.
\begin{theorem} \label{th:hypmeanPDE}
  Let $u \in C^2(B_1)\cap C^1(\bar{B}_1)$ such that $u = 1$ on $\partial B_1$. 
  If $H(u) \ge \sqrt{2}n$ where $H$ is given by \eqref{eq:hypH}, then
  \[
      u \equiv v = 2 - \sqrt{2 - r^2}, \qquad \textup{on $\bar{B}_1$}.
  \]
\end{theorem}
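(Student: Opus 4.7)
The plan is to combine a moving sphere (sliding barrier) argument with the strong maximum principle and the boundary point lemma for the quasilinear operator $H$ that are proved in the Appendix. The barrier family consists of hyperbolic dilations of $v$, namely
\[
   v^{(\lambda)}(x) = 2\lambda - \sqrt{2\lambda^2 - r^2}, \qquad r \le \lambda\sqrt{2},\ \lambda \ge 1/\sqrt{2},
\]
each of which satisfies $H(v^{(\lambda)}) = \sqrt{2}n$. The crucial feature is that the boundary trace $v^{(\lambda)}|_{\partial B_1} = 2\lambda - \sqrt{2\lambda^2 - 1} \ge 1$ equals $1$ precisely at $\lambda = 1$, and this minimum is nondegenerate.

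First I would establish $v \ge u$ on $\bar B_1$. For $\lambda$ large, $v^{(\lambda)} > u$ on $\bar B_1$; decreasing $\lambda$, let $\lambda_0$ be the infimum of $\lambda \ge 1$ for which $v^{(\lambda')} \ge u$ on $\bar B_1$ for all $\lambda' \ge \lambda$. At $\lambda_0$ the two graphs must touch, and if $\lambda_0 > 1$, linearizing $H(u) \ge H(v^{(\lambda_0)})$ along the segment from $v^{(\lambda_0)}$ to $u$ produces a linear elliptic inequality $Lw \ge 0$ satisfied by $w = u - v^{(\lambda_0)} \le 0$. An interior touch would, via the Appendix's strong maximum principle (applied at the touch point where $w$ attains its maximum value $0$, so the sign of the zero-order coefficient is immaterial), force $w \equiv 0$ on $\bar B_1$; but this contradicts $v^{(\lambda_0)}|_{\partial B_1} > 1 = u|_{\partial B_1}$. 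A boundary touch forces $v^{(\lambda_0)}(p) = 1$ directly, and hence $\lambda_0 = 1$ from the explicit formula for $v^{(\lambda_0)}|_{\partial B_1}$. Therefore $\lambda_0 = 1$ and $v \ge u$.

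Next, set $w = v - u \ge 0$ with $w|_{\partial B_1} = 0$. The linearization of $H(v) \le H(u)$ yields $\tilde L w \le 0$, and the strong maximum principle gives either $w \equiv 0$ (and we are done) or $w > 0$ strictly in $B_1$. In the latter case the boundary point lemma gives $\partial w/\partial\nu < 0$ at every point of $\partial B_1$; since $\partial v/\partial\nu \equiv 1$, this means $\partial u/\partial\nu > 1$ strictly on $\partial B_1$. Using this strict inequality, a Taylor expansion near $\partial B_1$ combined with the uniform gap $v - u \ge c > 0$ on $\bar B_{1-\delta}$ shows that $v^{(\lambda)} > u$ strictly on $\bar B_1$ also for $\lambda$ slightly less than $1$. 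One can then repeat the sliding argument with $\lambda$ decreasing below $1$: the first $\lambda_1 < 1$ at which $v^{(\lambda_1)}$ ceases to strictly dominate $u$ gives a touch, and the same trichotomy as before forces $\lambda_1 = 1$, a contradiction. Hence $w \equiv 0$ and $u \equiv v$.

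The main obstacle will be the lower-side sliding in the last step: one must check that the family $v^{(\lambda)}$ actually produces a touch before $\lambda$ reaches $1/\sqrt{2}$, the endpoint past which $v^{(\lambda)}$ is no longer defined over all of $\bar B_1$. When $u(0) > \sqrt{2} - 1$, the inequality $v^{(1/\sqrt{2})}(0) = \sqrt{2} - 1 < u(0)$ forces an interior touch automatically; for general $u(0)$, one likely has to enlarge the barrier family to include horizontal translates of the model sphere (a two-parameter family of off-axis hyperbolic dilations), which still reduces to the same strong maximum principle and boundary point lemma but with additional bookkeeping to track the center.
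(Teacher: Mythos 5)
Your first two phases are essentially the paper's argument: establish $v \ge u$ by sliding the dilated spheres $v^{(\lambda)}$ (the paper's $v_q$ with $q = 2\lambda$) from above, using Theorem~\ref{th:smax2}\,\eqref{th:smax2:1} to rule out $\lambda_0 > 1$; then, in the case $v > u$ in $B_1$, use the boundary point lemma (Theorem~\ref{th:smax2}\,\eqref{th:smax2:2}) to obtain a strict inequality on the normal derivative and push the barrier slightly below $\lambda = 1$. The second half of that push is carried out in the paper by the sharper observation that the boundary point lemma gives $u(x) < |x|$ near $\partial B_1$ while $v_q(r) \ge r$ holds uniformly in $q$ (inequality \eqref{eq:vcone}); this is cleaner than an ad hoc Taylor expansion, but it is morally what you are doing.

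The genuine gap is exactly the one you flag at the end, but your proposed remedy is off the mark. You treat $\lambda = 1/\sqrt{2}$ (i.e.\ $q = \sqrt{2}$) as a hard endpoint past which the barrier family fails, and you suggest enlarging to horizontal translates to force a touch before then. This is both unnecessary and unlikely to work cleanly: a horizontally shifted sphere has a non-constant boundary trace on $\partial B_1$, which destroys the simple comparison $v_q|_{\partial B_1} > 1 = u|_{\partial B_1}$ that the whole argument hinges on. What the paper does instead is keep decreasing $q$ past $\sqrt{2}$, letting $v_q$ become a spherical cap defined only on $\bar{B}_\tau$ with $\tau = q/\sqrt{2} < 1$. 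Two facts close the argument. First, because the paper establishes $u > 0$ on $\bar{B}_1$ at the very start of the proof (a step missing from your outline, obtained by evaluating $H$ at a non-positive interior minimum of $u$), one has $v_q(0) = (1-1/\sqrt{2})\,q \to 0 < u(0)$ as $q \to 0^+$, so a touch must occur at some $q_1 \in (0,2)$. Second, if $q_1 < \sqrt{2}$, the radial derivative $\partial v_{q_1}/\partial r \to +\infty$ as $r \to \tau$, so a touch at $\partial B_\tau$ would make $Du$ unbounded at an interior point of $B_1$, contradicting $u \in C^2(B_1)$; an interior touch on $B_\tau$ then forces, via Theorem~\ref{th:smax2}\,\eqref{th:smax2:1}, $v_{q_1} \equiv u$ on $\bar{B}_\tau$, which again contradicts $u \in C^2(B_1)$ for the same reason. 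No enlargement of the barrier family is needed, and no lower bound on $u(0)$ beyond positivity is used.

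One smaller wording issue: in the lower-side sliding you say the trichotomy ``forces $\lambda_1 = 1$, a contradiction.'' That is not what happens; for $\lambda_1 \in [1/\sqrt{2},1)$ the boundary inequality \eqref{eq:uvbdy} still gives $v^{(\lambda_1)}|_{\partial B_1} > 1$, so a boundary touch is impossible, an interior touch gives $v^{(\lambda_1)} \equiv u$ by the strong maximum principle, and that identity contradicts the boundary values. The conclusion is that no $\lambda_1$ in this range exists, not that $\lambda_1 = 1$.
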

\begin{pfthmhypmeanPDE}
  Let us first point out that 
  \[
       u > 0 \qquad \textup{on $\bar{B}_1$},
  \]
because, if $u$ achieves a non-positive minimum at $y_0\in B_1$, then a direct computation shows that $H(u)(y_0) \le n$.

  Let
  \[
     v_q (r) = q - \sqrt{\frac{q^2}{2} - r^2}, \qquad \textup{for all $r = |x| \le \min \left\{1, q/\sqrt{2} \right\}$ },
  \]
  where $q > 0$ is a constant. 
  Then $\{ v_q \}$ defines a family of geodesic spheres, of radius $q/\sqrt{2}$ centered at $(0,\ldots,0,q)$,
   whose hyperbolic mean curvatures are all equal to $\sqrt{2} n$. 
   
   The following two inequalities will be used: First, for all $q \ge \sqrt{2}$ and $q \ne 2$,
  \begin{equation} \label{eq:uvbdy}
     v_{q}\big|_{\partial B_1} = v_{q}(1) = q - \sqrt{\frac{q^2}{2} - 1} > 1 = u\big|_{\partial B_1}.
  \end{equation}
  Second,
  \begin{equation} \label{eq:vcone}
     v_q (r) \ge r, \qquad \textup{for all $q > 0$ and all $r \le q/\sqrt{2}$}. 
  \end{equation}
  
  We start with a very large $q$ so that there is no intersection of the graphs of $v_q$ and $u$. 
  Then, we continuously decrease $q$ until ultimately the graph of $v_q$ begins to intersect the graph of $u$. Denote by $q = q_0$ for this moment. Observe that $q_0 \ge 2$, since $v_{q}$, with $q =2$, is exactly the model sphere $v$ which intersects the graph of $u$ at least on $\partial B_1$. 
  
  We assert that $q_0 = 2$. Suppose that $q_0 > 2$. Then $v_{q_0} \in C^{\infty}(\bar{B}_1)$, and  $v_{q_0} \ge u$ on $\bar{B}_1$ by the above construction.
  In view of \eqref{eq:uvbdy}, $v_{q_0}$ must be equal to $u$ at some interior point in $B_1$. Applying Theorem~\ref{th:smax2} \eqref{th:smax2:1} with $Q = H$ and $V = B_1$ yields that 
  \[
      v_{q_0} \equiv u \qquad \textup{on $\bar{B}_1$}.
  \]
  This contradicts with \eqref{eq:uvbdy}. The assertion is proved.
  
  Thus, we have
  \[
      v_{q_0} = v = 2 - \sqrt{2 - r^2}, \qquad \textup{for all $r \le 1$}.
  \]
  Clearly, $v \in C^{\infty}(\bar{B}_1)$, and $v \ge u$ on $\bar{B}_1$.  
  Note that if $v = u$ at some interior point of $B_1$. Again by Theorem~\ref{th:smax2} we have $u \equiv v$ on $\bar{B}_1$. Therefore, to prove this theorem, it suffices to rule out the remaining case, i.e., 
  \begin{equation} \label{eq:case2}
     v(x) > u(x) \qquad \textup{for all $x$ in $B_1$}.
  \end{equation}

Assume that \eqref{eq:case2} holds. We \textbf{claim} that
there exists a sufficiently small constant $\epsilon > 0$ such that for any $2 > q > 2 - \epsilon$, 
\[
    v_q = q - \sqrt{\frac{q^2}{2} - r^2} > u \qquad \textup{on $\bar{B}_1$}.
\]

Deferring its proof, we first proceed to exclude \eqref{eq:case2}: By the claim we can find a $q \in (2 - \epsilon, 2)$ so that the graph of $v_q$ lies completely above the graph of $u$ over $\bar{B}_1$. Then, we can move the graph of $v_q$ downward, by continuously decreasing $q$. Note that eventually the graph of $v_q$ must intersect the graph of $u$, since $u(0)> 0$ and
\[
   v_q(0) = \min_{B_1} v_q =  \left( 1 - \frac{1}{\sqrt{2}}\right) q \to 0, \qquad \textup{as $q \to 0^+$}.
\]
We denote by $q_1$ for the moment that the graph of $v_{q_1}$ begins to intersect the graph of $u$. There are two cases: 

Case $1^{\circ}$: Suppose that $2 > q_1 \ge \sqrt{2}$. Then $v_{q_1} \in C^{\infty}(B_1) \cap C^0(\bar{B}_1)$ and $v_{q_1} \ge u$ on $\bar{B}_1$. 
Because of \eqref{eq:uvbdy}, $v_{q_1}$ has to equal $u$ at some interior point of $B_1$. By Theorem~\ref{th:smax2} \eqref{th:smax2:1}, we obtain that $v_{q_1} \equiv u$ on $\bar{B}_1$, which is a contradiction.

Case $2^{\circ}$: Suppose that $\sqrt{2} > q_1 > 0$. Then $v_{q_1} \in C^{\infty}(B_{\tau}) \cap C^0(\bar{B}_{\tau})$ and $v_{q_1} \ge u$ on $\bar{B}_{\tau}$, where $\tau = q_1/ \sqrt{2} < 1$. Note that
\[
  \frac{\partial v_{q_1}}{\partial r}(r) = \frac{r}{\sqrt{q_1^2/2 - r^2}} \to + \infty, \qquad \textup{as $r \to \tau$}.
\]
This implies that the graph of $v_{q_1}$ cannot touch the graph of $u$ over a point on $\partial B_{\tau}$, for otherwise $u$ would not be in $C^2(B_1)$. Thus, $v_{q_1} = u$ at some interior point of $B_{\tau}$. However, applying Theorem~\ref{th:smax2} \eqref{th:smax2:1} yields that $v_{q_1} \equiv u$ on $\bar{B}_{\tau}$; this again violates the assumption that $u \in C^2(B_1)$. Therefore, combining the two cases, we have ruled out \eqref{eq:case2}, with the aid of the claim.

Let us now settle the claim. A key observation is 
that there is a small constant $\delta > 0$ such that 
\begin{equation}\label{eq:cone}
   u(x) < |x| = r, \qquad \textup{for all $1 - \delta \le |x| < 1$}.
\end{equation}
In fact, by \eqref{eq:case2} and applying Theorem~\ref{th:smax2} \eqref{th:smax2:2} yields that, 
\[
   0 < (Du - Dv)(x_0) \cdot x_0 = (Du - Dr)(x_0)\cdot x_0, \qquad \textup{for each $x_0 \in \partial B_1$},
\]
where we use the fact that
\[
    Dv(x_0) = Dr(x_0) \qquad \textup{for all $x_0 \in \partial B_1$}.
\]
Then by continuity, there exists a sufficiently small $\delta > 0$ such that
\[
   D (u-r)(x) \cdot \frac{x}{|x|} > 0, \qquad \textup{for all $1 - \delta \le |x| \le 1$}.
\]
Since $u \in C^1(\bar{B}_1)$, we have by the mean value theorem that
\[
   u (x) - r(x) < 0, \qquad \textup{for all $1 - \delta \le |x| \le 1$}.
\]
This proves \eqref{eq:cone}. 

Let
\[
   d = \frac{1}{2} \min \{v(x) - u(x) : \; |x| \le 1 - \delta\}. 
\]
Then $d > 0$, because of \eqref{eq:case2}. On the other hand, by the uniform continuity, 
\[
   \sup_{\bar{B}_1}|v - v_q| \le C (2 - q), \qquad \textup{for all $(2 +\sqrt{2})/2 < q < 2$}.
\]
where $C>0$ is a constant independent of $q$ and $\delta$. Now let 
\[
			\epsilon = \min\{d/C, (2 -\sqrt{2})/2\} > 0.
\]
We then have for any $2 - \epsilon < q < 2$,
\[
   v_q \ge v - C \epsilon > v - d > v - (v -u) = u, \qquad \textup{for all $|x| \le 1 - \delta$}.
\]
Moreover, combining \eqref{eq:vcone} with \eqref{eq:cone} gives that
\[
   v_q (x) \ge |x| > u(x), \qquad \textup{for all $1 - \delta \le |x| < 1$}.
\]
Therefore, we conclude that for each $2 - \epsilon < q < 2$,
\[
   v_q(x) > u(x), \qquad \textup{for all $|x| \le 1$}.
\]
This proves the claim, and hence, the proof is completed.
\qed
\end{pfthmhypmeanPDE}
\begin{remark}
  As pointed out by Rick Schoen, if the function $u$ in Theorem~\ref{th:hypmeanPDE} satisfies in addition that $\partial u /\partial r = \partial v / \partial r$ on the boundary (which holds if $\partial M$ is totally geodesic in $M$), then \eqref{eq:case2} can be immediately ruled out by Theorem~\ref{th:smax2} \eqref{th:smax2:2}.
\end{remark}

Theorem~\ref{th:hyp2graph} then follows immediately from Proposition~\ref{pr:hypRH} and \ref{pr:hypmodel}, and Theorem~\ref{th:hypmeanPDE}. There is also a $k$-convex version of the rigidity theorem, which also follows from Proposition~\ref{pr:hypRH} and \ref{pr:hypmodel}, Theorem~\ref{th:hypmeanPDE}, together with  Maclaurin's inequality \eqref{eq:Mac}. 
\begin{theorem} \label{th:hypkgraph}
   Let $u \in C^2(B_1) \cap C^1(\bar{B}_1)$ satisfy that $u > 0$ in $B_1$ and $u = 1$ on $\partial B_1$. Let $M_u$ be the graph of $u$ over $\bar{B}_1$. Assume for some $1 \le k \le n$ that 
   \[
      \sigma_k(\kappa) \ge 2^{k/2}\binom{n}{k},
   \]
   and that $M_u$ is $k$-convex if $k \ge 3$, then $M_u$ is isometric to $\mathbb{S}^n_+$.
\end{theorem}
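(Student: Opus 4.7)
The plan is to reduce the $\sigma_k$ hypothesis to a pointwise lower bound on the hyperbolic mean curvature $H \ge \sqrt{2}\,n$ and then invoke Theorem~\ref{th:hypmeanPDE} to pin down $u$ as the model sphere profile $v = 2 - \sqrt{2 - r^2}$; the isometry statement will then follow from Proposition~\ref{pr:hypmodel}. This mirrors exactly the strategy used in the Euclidean setting, where the analogous reduction is carried out to feed Lemma~\ref{le:mean}.

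I would split the reduction into three cases according to $k$. For $k = 1$, the hypothesis is literally $H = \sigma_1(\kappa) \ge \sqrt{2}\,n$, so there is nothing to show. For $k = 2$, the hypothesis $\sigma_2(\kappa) \ge 2\binom{n}{2} = n(n-1)$ combined with the Gauss equation \eqref{eq:hypR2} gives $R \ge n(n-1)$, and Proposition~\ref{pr:hypRH} then immediately yields $H \ge \sqrt{2}\,n$. For $k \ge 3$, the $k$-convexity assumption ensures that $\sigma_j(\kappa) > 0$ for $j = 1, \ldots, k$, so Maclaurin's inequality \eqref{eq:Mac} applies and produces
\[
   \frac{H}{n} = \frac{\sigma_1(\kappa)}{n} \ge \left[\frac{\sigma_k(\kappa)}{\binom{n}{k}}\right]^{1/k} \ge \bigl(2^{k/2}\bigr)^{1/k} = \sqrt{2},
\]
so again $H \ge \sqrt{2}\,n$ pointwise on $M_u$.

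With the bound $H \ge \sqrt{2}\,n$ in hand, the hypotheses of Theorem~\ref{th:hypmeanPDE} are met: $u \in C^2(B_1)\cap C^1(\bar{B}_1)$ with $u = 1$ on $\partial B_1$ and $u > 0$ in $B_1$. Applying that theorem forces $u \equiv v = 2 - \sqrt{2 - r^2}$ on $\bar{B}_1$, so $M_u = M_v$ is exactly the model sphere. Proposition~\ref{pr:hypmodel} then identifies $M_v$ with the round hemisphere $\mathbb{S}^n_+$, finishing the argument.

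There is no real obstacle here beyond choosing the correct reduction for each $k$; the heavy lifting was already done in Theorem~\ref{th:hypmeanPDE} (the moving-spheres argument with the model sphere $v_q$ and the boundary claim \eqref{eq:cone}) and in Proposition~\ref{pr:hypRH} (the hyperbolic Newton-type identity). The only delicate point worth double-checking is the sign of $H$ when invoking Proposition~\ref{pr:hypRH} in the $k = 2$ case: the identity \eqref{eq:hypRH} only gives $|H| \ge \sqrt{2}\,n$, but with the upward normal convention used in \eqref{eq:hypH} and the $k$-convexity hypothesis when $k \ge 3$ (resp.\ the graph structure with $u>0$ in the $k=1,2$ cases), one checks that $H$ is positive, so the $\sqrt{2}\,n$ lower bound goes through without ambiguity before being fed into Theorem~\ref{th:hypmeanPDE}.
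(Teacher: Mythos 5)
Your proposal is correct and follows essentially the same route the paper takes: reduce $\sigma_k \ge 2^{k/2}\binom{n}{k}$ to $H \ge \sqrt{2}n$ (directly for $k=1$, via the Gauss equation \eqref{eq:hypR2} and Proposition~\ref{pr:hypRH} for $k=2$, and via $k$-convexity plus Maclaurin's inequality \eqref{eq:Mac} for $k\ge 3$), then invoke Theorem~\ref{th:hypmeanPDE} to force $u \equiv v$ and Proposition~\ref{pr:hypmodel} to conclude the isometry with $\mathbb{S}^n_+$. The paper compresses this to a one-sentence citation of Propositions~\ref{pr:hypRH}, \ref{pr:hypmodel}, Theorem~\ref{th:hypmeanPDE}, and Maclaurin's inequality; your case split and your flagging of the sign ambiguity in \eqref{eq:hypRH} are faithful elaborations of exactly that argument.
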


Next, we generalize the rigidity results to a hypersurface in $\mathbb{H}^{n+1}$. Similar to the Euclidean case, we will introduce the hyperbolic incorporation condition. Let $\mathfrak{C}_+$ be the upper solid hemicone, i.e.,
\[
		\mathfrak{C}_+ = \{ ( x^1, \dots, x^{n+1} ) \in \mathbb{H}^{n+1} : x^{n+1} \ge r \mbox{ and } x^{n+1} > 1\}.
\]
\begin{defn} \label{de:hypincp}
  Let $M \subset \mathbb{H}^{n+1}$ be a compact $C^2$ hypersurface with boundary $\partial M$. $M$ is said to have the \emph{hyperbolic incorporation condition}, 
  if $M$ satisfies the following three conditions:
  \begin{enumeratei}
     \item \label{de:hyp:1} The boundary $\partial M$ is diffeomorphic to $\mathbb{S}^{n-1}$.
     \item \label{de:hyp:2} $\partial M \subset \mathbb{R}^n \times \{1\}$, and $B_1$ is contained in the region enclosed by $\partial M$ in $\mathbb{R}^{n} \times \{1 \}$.
     \item \label{de:hyp:3} $M \cap \mathfrak{C}_+ = \emptyset$.
  \end{enumeratei}
\end{defn}
The rigidity theorem for hypersurfaces is as follows:
\begin{theorem} \label{th:hypmean}
   Let $M \subset \mathbb{H}^{n+1}$ be a compact $C^2$ hypersurface with boundary $\partial M$. Assume that $M$ satisfies the hyperbolic incorporation condition. If $M$ has hyperbolic mean curvature $H \ge \sqrt{2}n$, then $M$ is isometric to $\mathbb{S}^n_+$.
\end{theorem}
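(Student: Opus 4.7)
The plan is to adapt the sliding sphere argument of Lemma~\ref{le:mean} by replacing the unit Euclidean spheres with the one-parameter family $S^*_q$ of hyperbolic geodesic spheres, realized as Euclidean spheres of radius $q/\sqrt{2}$ centered at $(0,\ldots,0,q)$. Each $S^*_q$ has hyperbolic mean curvature $\sqrt{2}\,n$ with respect to its inward normal, and $S^*_2$ contains the model sphere $M_v$ of Proposition~\ref{pr:hypmodel}. Orient $M$ by the unit normal $\nu$ pointing into the region obtained by capping off $M$ with the planar disk in $\mathbb{R}^n\times\{1\}$ bounded by $\partial M$, so that $H \ge \sqrt{2}\,n$ with respect to $\nu$. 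Since $M$ is compact, $S^*_q \cap M = \emptyset$ for all $q$ sufficiently large; decrease $q$ continuously and let $q_0$ be the largest value with $S^*_{q_0} \cap M \ne \emptyset$.

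The core assertion is that if $S^*_{q_0}$ is tangent to $M$ at an interior point $p$, then $M$ coincides with a portion of $S^*_{q_0}$. Writing both surfaces locally as graphs $u_1$ (for $S^*_{q_0}$) and $u_2$ (for $M$) over $T_p M$ with $u_1 \ge u_2$ and $u_1(p) = u_2(p)$, the hyperbolic mean curvatures with respect to the common upward unit normal satisfy $H(u_1) = \sqrt{2}\,n \le H(u_2)$; hence Theorem~\ref{th:smax2}\eqref{th:smax2:1} applied to the hyperbolic mean curvature operator yields $u_1 \equiv u_2$ in a neighborhood of $p$, and the standard clopen argument propagates the coincidence throughout the interior of $M$. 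Consequently $\partial M \subset S^*_{q_0} \cap (\mathbb{R}^n \times \{1\})$, which an elementary computation identifies with $\partial B_{\rho(q_0)} \times \{1\}$ where $\rho(q_0)^2 = 1 - (q_0 - 2)^2/2$ when $q_0 \in [2 - \sqrt{2}, 2 + \sqrt{2}]$, and is empty otherwise. Since the hyperbolic incorporation condition forces $\partial M$ to enclose $B_1 \times \{1\}$, we require $\rho(q_0) \ge 1$; combined with the upper bound $\rho(q_0) \le 1$, this gives $\rho(q_0) = 1$, so $q_0 = 2$ and $M = M_v$ is isometric to $\mathbb{S}^n_+$ by Proposition~\ref{pr:hypmodel}.

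It remains to exclude the scenario $S^*_{q_0} \cap M \subset \partial M$. For $q_0 \ne 2$ (in the range where the intersection is nonempty), the circle $\partial B_{\rho(q_0)} \times \{1\}$ lies strictly inside $B_1 \times \{1\}$ and hence cannot meet $\partial M$, so boundary-only touching can occur only at $q_0 = 2$. At each $x_0 = (y_0, 1) \in \partial M \cap (\partial B_1 \times \{1\})$, the model sphere is tangent to the cone surface $\partial \mathfrak{C}_+ = \{x^{n+1} = r\}$ at $x_0$ (both have Euclidean outward normal parallel to $(y_0, -1)$). Writing $M$, $S^*_2$, and $\partial \mathfrak{C}_+$ as graphs $\psi$, $\varphi$, $\bar{\varphi}$ over $T_{x_0} \partial \mathfrak{C}_+$, the Hopf boundary point lemma (Theorem~\ref{th:smax2}\eqref{th:smax2:2}) gives $\partial_\nu (\psi - \varphi)(x_0) > 0$; the tangency $\partial_\nu \varphi(x_0) = \partial_\nu \bar{\varphi}(x_0)$ upgrades this to $\partial_\nu (\psi - \bar{\varphi})(x_0) > 0$. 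By continuity, $M$ stays strictly away from $\partial \mathfrak{C}_+$ in a small half-neighborhood of $\partial M$ just beneath $x^{n+1} = 1$, so there exists $\epsilon > 0$ such that $S^*_q$ is disjoint from $M$ near $\partial M$ for every $q \in (2 - \epsilon, 2)$. Continuing to slide $q$ downward as at the end of the proof of Lemma~\ref{le:mean}, $S^*_q$ must eventually be tangent to $M$ at an interior point for some $q_0' < 2$; by the previous paragraph this forces $\rho(q_0') = 1$ and hence $q_0' = 2$, a contradiction.

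The main obstacle is the last paragraph. One must correctly invoke the Hopf boundary point lemma for the hyperbolic mean curvature operator---which, as Example~\ref{ex:hypH} warns, is subtler than its Euclidean counterpart---and then exploit the tangency of the model sphere with the cone $\partial \mathfrak{C}_+$ along $\partial B_1 \times \{1\}$ to convert the normal-derivative inequality into the geometric separation that permits the sliding spheres to slip past $q = 2$.
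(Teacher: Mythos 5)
Your overall sliding-sphere strategy matches the paper's, and your observation that $S^*_{q_0}\cap(\mathbb{R}^n\times\{1\})=\partial B_{\rho(q_0)}\times\{1\}$ with $\rho(q_0)\le 1$ is a clean way to pin down $q_0=2$. But there is a genuine gap in the way you invoke the maximum principle. You write $M$ and $S^*_{q_0}$ as graphs over $T_pM$ at an interior tangency and over $T_{x_0}\partial\mathfrak{C}_+$ at the boundary, and then apply Theorem~\ref{th:smax2} to ``the hyperbolic mean curvature operator.'' Theorem~\ref{th:smax2} as stated and proved in the appendix concerns the specific operator $Q(u)=\tilde a^{ij}(u,Du)u_{ij}+\tilde b(Du)$, which (with $b_0=n$, $z(t)=t$) is the hyperbolic mean curvature of a \emph{horizontal} graph $x^{n+1}=u(x)$. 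Unlike the Euclidean operator $H_0$, the hyperbolic one is not invariant under Euclidean rotations of the ambient space, because the metric $x_{n+1}^{-2}\delta_{ij}$ singles out the $x^{n+1}$ direction; a graph over a tilted plane is governed by a different quasilinear operator whose coefficients also depend on the point, and Theorem~\ref{th:smax2} does not directly apply to it. This is precisely why in the Euclidean Lemma~\ref{le:mean} the paper writes graphs over $T_pM$ and over $T_{x_0}\mathit{C}$, but in the hyperbolic proofs of Theorems~\ref{th:hypmeanPDE} and \ref{th:hypmean} it always works with \emph{horizontal} graphs over balls $V\subset B_1$. Example~\ref{ex:hypH} is there exactly to warn against treating the hyperbolic operator as if it behaved like the Euclidean one.

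Relatedly, you skipped the paper's ``key observation'' --- that $S(q_0)\cap M$ lies in the interior of the lower-hemisphere graph $S(q_0)$ because $\partial S(q_0)\subset S(3q_0)$ --- which is what lets the paper write both surfaces as horizontal graphs near the touching point (and also what replaces Case~$2^\circ$ of Theorem~\ref{th:hypmeanPDE} in the general hypersurface setting). To repair your argument you would either need to establish that the tangent point admits a horizontal-graph description (which the key observation provides), or reprove the strong maximum principle and boundary point lemma for the tilted-graph hyperbolic mean curvature operator, verifying that its coefficients satisfy the hypotheses required in the appendix. The geometric $\rho(q_0)$ step and the use of the cone $\partial\mathfrak{C}_+$ as an intermediary barrier are fine in spirit once the maximum-principle machinery is made to apply; the paper achieves the same end more explicitly via the inequalities $v_q\ge r$ and $u<r$ near $\partial B_1$.
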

\begin{pfthmhypmean}
  The proof is similar to that of Theorem~\ref{th:hypmeanPDE}, so we only point out the difference here. 
  We use the following family of functions,
  \[
     v_q (r) = q - \sqrt{\frac{q^2}{2} - r^2}, \qquad \textup{for all $r \le q/\sqrt{2}$},
  \]
  where $q > 0$ serves as a parameter. Denote by $S(q)$ the graph of $v_q$. Again we start with a very large $q$ so that $S(q)$ has no intersection with $M$. Then we move $S(q)$ downward, by continuously decreasing $q$, until $q = q_0$ when $S(q)$ begins to intersect $M$. 
  
  Observe that $S(q_0) \cap M$ is contained in the interior of $S(q_0)$. To see this, notice the fact that
  \[
     \partial S(q_0) = \{(x^1,\ldots,x^{n+1}): r = q_0/\sqrt{2}, x^{n+1} = q_0\} \subset S(3 q_0).
  \]
   Thus, if there exists an $x \in \partial S(q_0) \cap M$, then $S(3q_0)$ should also intersect $M$ at $x$. This contradicts our choice of $q_0$.
   
   For any $y \in S(q_0)\cap M$, the observation enables us to locally write $M$ near $y$ as a graph over a small ball $V\subset B_1$. The \eqref{de:hyp:3} of incorporation condition assures that the local graph has the right sign for its mean curvature. Thus, similar to the proof of Theorem~\ref{th:hypmeanPDE}, we can apply Theorem~\ref{th:smax2} to obtain that $q_0 = 2$, and that either $M$ is isometric to $\mathbb{S}^n_+$, or $S(2)$ only intersects $M$ at $\partial M$. It remains to rule out the latter case. The process goes the same as that of Theorem~\ref{th:hypmeanPDE}, except that the Case $2^{\circ}$ here is excluded by virtue of the above observation. The difference is that a hypersurface can be vertical at the interior of $B_1$, in contrast to a graph.
\qed   
\end{pfthmhypmean}

Finally, Theorem~\ref{th:hypkcvx} follows as a corollary of Theorem~\ref{th:hypmean} and Maclaurin's inequality.

\section{Appendix: Strong maximum principles.}

Throughout the appendix, we denote by $B_r$ the open ball in $\mathbb{R}^n \times \{ 0 \}$ of radius $r$ centered at the origin, and denote $B = B_1$ for simplicity. A function with subscripts stands for the derivatives of the function. For example,
\[
   u_i = \frac{\partial u}{\partial x^i}, \quad u_{ij} = \frac{\partial^2 u}{\partial x^i x^j}.
\]
Consider
\[
   Q(u) = \sum_{i,j=1}^n \tilde{a}^{ij}(u, Du) u_{ij} + \tilde{b}(Du) \qquad \textup{in $B$},
\]
for all $u \in C^2(B)\cap C^0(\bar{B})$.
Here
\[
   \tilde{a}^{ij}(t, p) = \frac{z(t)}{\sqrt{1 + |p|^2}}\left( \delta_{ij} - \frac{p^i p^j}{1 + |p|^2}\right)
\]
and
\[
   \tilde{b}(p) = \frac{b_0}{\sqrt{1 + |p|^2}}, \qquad \textup{for all $p = (p^1,\ldots,p^n) \in \mathbb{R}^n$},
\]
in which $b_0$ is a constant, and $z = z(t)$ is a smooth function defined on a domain in $\mathbb{R}$.
In particular, $Q$ is the Euclidean mean curvature operator $H_0$ if $b_0 =0$ and $z \equiv 1$; $Q$ is the hyperbolic mean curvature operator $H$ if $b_0 = n$ and $z(t) = t$. 
\begin{theorem}\label{th:smax2}
  Let $\varphi, \psi \in C^2(B)\cap C^0(\bar{B})$ with $z(\psi)>0$ on $\bar{B}$.
  Let $V$ be an open ball in $B$ ($V$ could be $B$ itself) 
  such that
  \[
      Q(\varphi) \le Q(\psi) , \quad \textup{and \quad $\varphi \ge \psi$}, \qquad \textup{in $V$}.
  \]
  \begin{enumerate}
  \item \label{th:smax2:1} If $\varphi = \psi$ at some interior point of $V$, then
  \[
     \varphi \equiv \psi \qquad \textup{on $\bar{V}$}.
  \]
  \item \label{th:smax2:2} If $\varphi \in C^2(\bar{V})$, $\psi \in C^2(V)\cap C^1(\bar{V})$, $\varphi > \psi$ in $V$, and $\varphi = \psi$ at some $x_0 \in \partial V$, then
  \[
     D (\psi - \varphi)(x_0) \cdot \eta > 0,
  \]
  where $\eta$ is the outward unit normal vector at $x_0$ to $\partial V$.
  \end{enumerate}
\end{theorem}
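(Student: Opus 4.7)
The plan is to reduce both parts to the classical Hopf strong maximum principle and boundary point lemma for linear uniformly elliptic operators, by linearizing the difference $w = \varphi - \psi \ge 0$. The hypothesis $Q(\varphi) - Q(\psi) \le 0$ will be rewritten as
\[
   Lw \;:=\; A^{ij}(x)\,w_{ij} + B^{k}(x)\,w_{k} + C(x)\,w \;\le\; 0,
\]
where $L$ arises from the fundamental theorem of calculus applied along the segment $\tau \mapsto \bigl((1-\tau)\psi + \tau\varphi,\,(1-\tau)D\psi + \tau D\varphi\bigr)$ to the differences $\tilde{a}^{ij}(\varphi,D\varphi) - \tilde{a}^{ij}(\psi,D\psi)$ and $\tilde{b}(D\varphi) - \tilde{b}(D\psi)$. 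The structural form of $\tilde{a}^{ij}$ shows the matrix $\delta_{ij} - p^{i}p^{j}/(1+|p|^{2})$ has eigenvalues in $[1/(1+|p|^{2}),1]$, hence is uniformly positive definite whenever $|p|$ is bounded; combined with the positivity of $z$ at the base point of the expansion, this guarantees $L$ is uniformly elliptic with bounded coefficients on the relevant domain.

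In part \eqref{th:smax2:1}, both $\varphi$ and $\psi$ are $C^{2}$ in the interior, so either splitting yields coefficients bounded on every compact subset of $V$. One obtains $Lw \le 0$ with $w \ge 0$ attaining the value $0$ at some interior point $x_{*}$. Splitting the zeroth-order coefficient as $C = C^{+} - C^{-}$ and moving the non-negative term $C^{+} w$ to the right-hand side gives an inequality for $L - C^{+}$ whose zeroth-order coefficient $-C^{-} \le 0$ has the sign required by the classical strong minimum principle; applied on a ball around $x_{*}$ and propagated by connectedness, this forces $w \equiv 0$ on $\bar{V}$.

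For part \eqref{th:smax2:2}, the subtlety is that $\psi$ is only $C^{2}(V) \cap C^{1}(\bar{V})$, so the naive linearization (leading coefficient $\tilde{a}^{ij}(\varphi,D\varphi)$ with lower-order terms multiplied by $\psi_{ij}$) has coefficients that may blow up at $\partial V$. I would instead take the complementary splitting, with leading coefficient $A^{ij} = \tilde{a}^{ij}(\psi, D\psi)$ (still uniformly elliptic, since $|D\psi|$ is bounded on $\bar{V}$ and $z(\psi) > 0$) and lower-order coefficients carrying factors of $\varphi_{ij}$, which are bounded on $\bar{V}$ because $\varphi \in C^{2}(\bar{V})$. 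Then $w \in C^{2}(V) \cap C^{1}(\bar{V})$ satisfies $Lw \le 0$ with $w > 0$ in $V$ and $w(x_{0}) = 0$. Since $V$ is an open ball, the interior sphere condition holds at $x_{0}$; after the same splitting of $C$ as above, Hopf's boundary point lemma yields $\partial_{\eta} w(x_{0}) < 0$, which is exactly $D(\psi - \varphi)(x_{0}) \cdot \eta > 0$.

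The main obstacle I anticipate is precisely this regularity asymmetry in part \eqref{th:smax2:2}: choosing the wrong base point in the mean-value linearization produces coefficients that blow up as $x \to \partial V$, and Hopf's lemma cannot be invoked directly. Pinning the second-derivative dependence in the lower-order coefficients to $\varphi$ rather than $\psi$ is the bookkeeping step that restores boundedness up to $\partial V$ and allows the classical machinery to go through.
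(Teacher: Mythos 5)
Your proposal is correct and follows essentially the same route as the paper: the paper also pins the leading coefficient to $\tilde{a}^{ij}(\psi, D\psi)$ and lets the lower-order coefficients carry $\varphi_{lm}$ (via fundamental-theorem-of-calculus integrals), precisely so that in part~\eqref{th:smax2:2} the coefficients extend continuously to $\bar V$ and the ellipticity constant $\min_{\bar V} z(\psi)/(1+|D\psi|^2)^{3/2}$ stays positive. The only cosmetic differences are the sign convention ($h = \psi - \varphi \le 0$ in the paper rather than $w = \varphi - \psi \ge 0$) and that the paper absorbs the unsigned zeroth-order coefficient inside its proof of Hopf's lemma by working with $L - |c|$, whereas you split $C = C^{+} - C^{-}$ before invoking the classical statement.
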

\begin{remark}
  Part~\eqref{th:smax2:2} of the theorem is the boundary point lemma for quasilinear operators. The regularity of $\varphi$ and $\psi$ can be replaced by that at least one of $\varphi$ and $\psi$ belongs to $C^{2}(V)\cap C^{1,1}(\bar{V})$, while the other is in $C^2(V)\cap C^{0,1}(\bar{V})$ and at which $z > 0$.
\end{remark}
The proof of Theorem~\ref{th:smax2} is based on Hopf's strong maximum principle and boundary point lemma. 
Let $V$ be the open ball in $B$, and $L$ be the linear operator given by 
\begin{equation} \label{eq:defL2}
   L h = \sum_{i,j=1}^n a^{ij} h_{ij} + \sum_{i=1}^n b^i h_i + c h \qquad \textup{in $V$},
\end{equation}
for all $h \in C^2(V)$. Assume that the coefficient matrix $(a^{ij})$ is everywhere positive definite in $V$, and that $a^{ij}$, $b^i$, and $c$ are continuous in $V$ for all $1 \le i, j \le n$. 
A special case of Hopf's strong maximum principle can be stated as follows:
\begin{lemma} \label{le:Hopf2}
  Let $L$ be the operator given by \eqref{eq:defL2}, and $h \in C^2(V)$ such that $h \le 0$ and $L h \ge 0$ in $V$. If $h = 0$ at some interior point of $V$, then $h \equiv 0$ on $V$.
\end{lemma}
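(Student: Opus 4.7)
The plan is to run the classical Hopf barrier argument. A preliminary observation lets us reduce to the case where the zeroth-order coefficient is nonpositive: writing $c = c^+ - c^-$ with $c^\pm \ge 0$, the bound $h \le 0$ gives $c^+ h \le 0$, so the hypothesis $Lh \ge 0$ upgrades to $L_0 h \ge 0$ where $L_0$ is the operator obtained from $L$ by replacing $c$ with $-c^- \le 0$. Hence I may assume $c \le 0$ on $V$ from the outset.

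Consider $E = \{x \in V : h(x) = 0\}$, which is closed in $V$ by continuity and nonempty by hypothesis; it suffices to prove $E$ is open. Arguing by contradiction, I would select $z_0 \in V \setminus E$ whose distance $\rho$ to $E$ is strictly smaller than its distance to $\partial V$, so that $\overline{B_\rho(z_0)} \subset V$, $B_\rho(z_0) \subset V \setminus E$, and some $x_0 \in \partial B_\rho(z_0)$ satisfies $h(x_0) = 0$. On the annulus $A = B_\rho(z_0) \setminus \overline{B_{\rho/2}(z_0)}$, introduce the standard barrier
\[
  w(x) = e^{-\alpha|x - z_0|^2} - e^{-\alpha\rho^2}.
\]
On the compact set $\bar{A} \subset V$ the matrix $(a^{ij})$ is uniformly positive definite and all coefficients are bounded, so a routine computation shows the $\alpha^2$-order second-derivative term dominates the lower-order contributions, giving $Lw > 0$ on $A$ once $\alpha$ is taken large enough.

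Fix such $\alpha$ and consider $u = h + \epsilon w$ on $\bar{A}$. On $\partial B_{\rho/2}(z_0)$ the function $h$ is continuous and strictly negative, hence bounded away from $0$ by compactness, so choosing $\epsilon > 0$ sufficiently small makes $u < 0$ there; on $\partial B_\rho(z_0)$ we have $w = 0$ and $h \le 0$, so $u \le 0$. Meanwhile $Lu \ge \epsilon Lw > 0$ in $A$. The weak maximum principle for $L$ with $c \le 0$ then gives $u \le 0$ on $\bar{A}$, with equality attained at the boundary point $x_0$ where both $h$ and $w$ vanish.

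The contradiction comes from comparing derivatives at $x_0$. Since $x_0$ is an interior point of $V$ at which the globally nonpositive function $h$ attains its maximum value $0$, one has $\nabla h(x_0) = 0$. On the other hand, $u$ attains its maximum on $\bar{A}$ at the boundary point $x_0 \in \partial B_\rho(z_0)$, so along the outward normal $\nu = (x_0 - z_0)/\rho$ we have $\partial_\nu u(x_0) \ge 0$, i.e.
\[
   0 \le \partial_\nu h(x_0) + \epsilon\, \partial_\nu w(x_0) = 0 - 2\alpha\rho\epsilon\, e^{-\alpha\rho^2} < 0,
\]
which is absurd. Hence $E$ is open, and therefore $E = V$. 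The step I expect to require the most care is precisely the sign management for $c$: the Hopf barrier computation and the weak maximum principle both want $c \le 0$, so the one-sided hypothesis $h \le 0$ must be used at the very start to reduce to that case.
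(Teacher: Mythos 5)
Your argument is correct and follows the same route the paper takes: you reduce to a boundary point lemma situation by choosing a ball $B_\rho(z_0) \subset V \setminus E$ whose closure lies in $V$ and meets $E$ at a single boundary point $x_0$, then run the classical Hopf barrier on the annulus to derive a contradiction with $\nabla h(x_0)=0$. The only organizational difference is that the paper states and proves the Hopf boundary point lemma (Lemma~\ref{le:Hopfbdy}) as a separate result and then invokes it, whereas you inline the barrier construction; and the paper handles the sign of $c$ inside the barrier estimate by working with $L - |c|$ and using $-|c|h \ge 0$, while you do the equivalent reduction to $c \le 0$ at the outset via $c = c^+ - c^-$ and $c^+ h \le 0$. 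Both are standard and mathematically equivalent.
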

Lemma~\ref{le:Hopf2} follows, in turn, from the following boundary point lemma due to Hopf.
\begin{lemma}\label{le:Hopfbdy}
 Let $L$ be the operator given by \eqref{eq:defL2}, and $U$ be an open ball in $V$ such that $a^{ij}$, $b^i$, $c \in C^0(\bar{U})$ for all $1 \le i,j\le n$, and that there exists a positive constant $\theta$ such that
\begin{equation} \label{eq:theta2}
    \sum_{i,j=1}^n a^{ij} \xi_i\xi_j \ge \theta |\xi|^2, 
\end{equation}
for all $x \in U$ and $\xi = (\xi_1,\ldots,\xi_n) \in \mathbb{R}^n$. Let $h \in C^2(U) \cap C^1(\bar{U})$ such that $L h \ge 0$ in $U$. Suppose that for some $x_0 \in \partial U$,
 \[ 
    h(x_0)= 0 > h(x), \qquad \textup{for all $x \in U$}.
 \]
  Then,
 \[
     Dh(x_0) \cdot \mu > 0,
 \]
 where $\mu$ is the outward unit normal vector to $U$ at $x_0$.
 \end{lemma}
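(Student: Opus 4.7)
I would prove Lemma~\ref{le:Hopfbdy} by the classical Hopf barrier construction. Since $U$ is an open ball, write $U = B(y_0, R)$ with $|x_0 - y_0| = R$, so the geometry is already ideal: the barrier can be centered at $y_0$ and the comparison carried out on the annulus $U' = B(y_0, R) \setminus \overline{B(y_0, R/2)}$. A preliminary normalization is to assume $c \le 0$: by continuity $h \le 0$ on $\overline{U}$, hence $c^+ h \le 0$ where $c^+ = \max\{c, 0\}$, so replacing $L$ by $\tilde L := L - c^+$ yields an operator with nonpositive zeroth-order coefficient that still satisfies $\tilde L h \ge L h \ge 0$ and still vanishes on $h$ at $x_0$.

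The main step is to build a barrier $w$ on $U'$ that is positive in the interior, vanishes on the outer sphere $\partial B(y_0, R)$, and satisfies $L w > 0$ throughout $\overline{U'}$. I would take
\[
   w(x) = e^{-\alpha |x - y_0|^2} - e^{-\alpha R^2},
\]
with $\alpha$ to be chosen. Direct differentiation produces a leading term $4\alpha^2 \sum_{i,j} a^{ij}(x_i - y_{0i})(x_j - y_{0j})\, e^{-\alpha|x - y_0|^2}$, which by the uniform ellipticity assumption \eqref{eq:theta2} and the lower bound $|x - y_0| \ge R/2$ on $U'$ is at least $\alpha^2 \theta R^2 e^{-\alpha|x - y_0|^2}$. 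The remaining contributions from $a^{ii}$ and $b^i$ are $O(\alpha) e^{-\alpha|x-y_0|^2}$, and $c w$ is bounded; continuity of the coefficients on the compact set $\overline{U'}$ makes these uniformly controlled, so choosing $\alpha$ large enough gives $L w > 0$ on $\overline{U'}$.

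Next I compare $v := h + \epsilon w$ against zero on $\overline{U'}$ for small $\epsilon > 0$. On the outer sphere $w = 0$ and $h \le 0$; on the inner sphere $\{|x - y_0| = R/2\}$ the strict inequality $h < 0$ in $U$ together with compactness yields $h \le -m$ for some $m > 0$, while $w$ is bounded, so any $\epsilon$ with $\epsilon \max_{\overline{U'}} w \le m$ makes $v \le 0$ there as well. Since $L v = L h + \epsilon L w > 0$ on $U'$ and the zeroth-order coefficient is $\le 0$, the classical weak maximum principle gives $v \le 0$ on $\overline{U'}$. Hence $v(x_0) = 0$ is a boundary maximum on $\overline{U'}$, with $\mu = (x_0 - y_0)/R$ pointing out of $U'$; since $v$ is $C^1$ near $x_0$ we obtain $D v(x_0) \cdot \mu \ge 0$. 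The explicit computation $D w(x_0) \cdot \mu = -2\alpha R e^{-\alpha R^2} < 0$ then yields
\[
   D h(x_0) \cdot \mu \ge -\epsilon D w(x_0) \cdot \mu = 2 \epsilon \alpha R e^{-\alpha R^2} > 0,
\]
which is the claimed strict inequality.

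There is no deep obstruction in this argument; the substantive issues are (i) the preliminary reduction to $c \le 0$, which is genuinely needed because Lemma~\ref{le:Hopfbdy} imposes no sign on $c$, and (ii) making the $\alpha^2 \theta R^2$ ellipticity contribution dominate the $O(\alpha)$ and bounded lower-order terms, which is precisely the reason for working on the annulus (where $|x - y_0|$ is bounded below) rather than on the full ball $B(y_0, R)$.
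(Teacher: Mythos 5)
Your proof is correct and follows essentially the same route as the paper's: the same exponential Hopf barrier $e^{-\alpha|x-y_0|^2}-e^{-\alpha R^2}$ on the annulus $B(y_0,R)\setminus \overline{B(y_0,R/2)}$, the same preliminary normalization to a nonpositive zeroth-order coefficient (you subtract $c^+$, the paper subtracts $|c|$ --- the effect is identical), and the same weak-maximum-principle comparison of $h+\epsilon w$ with zero to read off the strict sign of the normal derivative at $x_0$.
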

\begin{pfleHopfbdy}
   We assume without loss of generality that $U = B_{\delta}$ for some $0 < \delta < 1$. Define
   \[
      w(x) = e^{-\lambda |x|^2} - e^{- \lambda \delta^2}, \qquad \textup{for all $x \in B_{\delta}$},
   \]
   where $\lambda > 0$ is a constant yet to be determined. 
   Notice that
   \begin{align*}
     & (L - |c|) w \\
     & \ge e^{-\lambda |x|^2} \left[4 \lambda^2 \sum_{i,j=1}^n a^{ij}x_i x_j - 2 \lambda \left(\sum_{i=1}^n b^i x_i + \sum_{i=1}^n a^{ii} \right) - (|c| - c)\right] \\
     & \ge e^{-\lambda |x|^2} \left[ 4 \lambda^2 \theta |x|^2 - 2 \lambda C (|x| + 1) - C \right].
   \end{align*}
   Here $\theta > 0$ is given by \eqref{eq:theta2}, and $C > 0$ is a constant depending on the $C^0(\bar{V})$--norms of $a^{ii}$, $b^i$, and $c$.
   Now consider the annulus $A = B_{\delta}\setminus B_{\delta/2}$. We can choose a sufficiently large constant $\lambda = \lambda(\theta, \delta, C)$ such that
   \begin{equation} \label{eq:subw}
      (L - |c|) w  > 0 \qquad \textup{on $\bar{A}$}.
   \end{equation}
   Since $h(x_0) = 0 > h(x)$ on $\partial B_{\delta/2}$, there is a constant $\varepsilon > 0$ such that
   \begin{equation} \label{eq:bdyuw2}
      h(x_0) = 0 \ge h(x) + \varepsilon w(x), 
   \end{equation}
   for all $x \in \partial B_{\delta/2}$. Note that \eqref{eq:bdyuw2} also holds on $\partial B_{\delta}$, where $w$ is identically zero. 
   On the other hand, we have by \eqref{eq:subw} that
   \[
      (L - |c|) (h + \varepsilon w) > - |c| h \ge 0 \qquad \textup{on $\bar{A}$}.
   \]
   It follows from the usual maximum principle that
   \[
       h + \varepsilon w \le 0 \qquad \textup{on $\bar{A}$}.
   \]
   But $h (x_0) + \varepsilon w (x_0) = 0$. Taking the normal derivative at $x_0$ yields that
   \[
       \frac{\partial u}{\partial \mu}(x_0) + \varepsilon \frac{\partial w}{\partial \mu}(x_0) \ge 0.
   \]
   Thus,
   \[
      \frac{\partial u}{\partial \mu}(x_0) \ge - \varepsilon \frac{\partial w}{\partial \mu}(x_0) = 2 \lambda \varepsilon \delta e^{-\lambda \delta^2} > 0.
   \]
   \qed
\end{pfleHopfbdy} 

\begin{pfleHopf2}
  Let
  \[
     E = \{ x \in V \mid h(x) = 0\}.
  \]
  Then $E$ is relatively closed in $V$. By the assumption $E$ is nonempty. We need to show that $E = V$. Suppose not. We can then choose a point $y \in V \setminus E$ such that
  \[
      d(y, E) < d (y, \partial V)/2.
  \]
  Consider the largest open ball $U \subset V \setminus E$ centered at $y$. Then, by the construction $\bar{U} \subset V$ and $\partial U$ must intersect $E$ at some point $x_0$. Thus, this implies that $a^{ij}$, $b^i$, $c \in C^0(\bar{U})$ for all $i,j = 1,\ldots,n$, that \eqref{eq:theta2} holds for some constant $\theta>0$ depending on $U$, and that $h \in C^2(\bar{U})$ and
  \[
      h(x_0) = 0 > h(x), \qquad \textup{for all $x \in U$}.
  \]
  Applying Lemma~\ref{le:Hopfbdy} yields that
  \[
     D h (x_0) \neq 0.
  \]
  This is a contradiction, since $x_0$ is an interior maximum point of $h$ in $V$.
  \qed
\end{pfleHopf2}

\begin{pfthmsmax2}
  Note that
  \begin{align*}
     & Q (\psi) - Q (\varphi) \\
     & = \sum_{i,j=1}^n \tilde{a}^{ij}(\psi, D\psi)(\psi_{ij} -  \varphi_{ij}) + \sum_{i,j=1}^n\varphi_{ij} \big[\tilde{a}^{ij}(\psi, D\psi) - \tilde{a}^{ij}(\psi,D\varphi) \big] \\
     & \quad + \tilde{b}(D\psi) - \tilde{b}(D\varphi) + \sum_{i,j=1}^n \varphi_{ij} \big[\tilde{a}^{ij}(\psi, D\varphi) - \tilde{a}^{ij}(\varphi,D\varphi) \big].
  \end{align*}
  Let $h = \psi - \varphi$. We can rewrite 
  \begin{equation*}
     0 \le Q(\psi) - Q(\varphi) = \sum_{i,j = 1}^n a^{ij}h_{ij} + \sum_{i=1}^n b^i h_i + c h = Lh,
  \end{equation*}
  where
  \begin{align*}
     a^{ij} & = \tilde{a}^{ij}(\psi, D\psi), \\
     b^i    & = \sum_{l,m=1}^n \varphi_{lm} \int_0^1 \frac{\partial \tilde{a}^{lm}}{\partial p^i}\big(\psi, tD\psi + (1 - t) D\varphi\big) dt \\
     				& \quad + \int_0^1 \frac{\partial \tilde{b}}{\partial p^i}\big(tD\psi + (1-t)D\varphi \big)dt, \\
     c			& = \sum_{l,m=1}^n \varphi_{lm} \int_0^1 \frac{\partial \tilde{a}^{lm}}{\partial z} \big(t\psi + (1 - t) \varphi, D\varphi \big) dt,
  \end{align*}
  for all $i, j = 1, \ldots, n$. Then since $\varphi, \psi \in C^2(B)$, $a^{ij}$, $b^i$, $c$ are continuous in $B$ for all $i,j=1,\ldots, n$. Furthermore, we have
  \begin{equation*} 
     \sum_{i,j=1}^n a^{ij} \xi_i \xi_j \ge \frac{z(\psi)}{(1 + |D\psi|^2)^{3/2}} |\xi|^2 > 0
  \end{equation*}
  for all $x \in B$ and $\xi \in \mathbb{R}^n \setminus \{0\}$. Thus, the first part of Theorem~\ref{th:smax2} follows immediately from Lemma~\ref{le:Hopf2}. For the second part, since $\varphi \in C^2(\bar{V})$ and $\psi \in C^1(\bar{V})$, we have $a^{ij}, b^i, c \in C^0(\bar{V})$ for all $1 \le i,j\le n$. Furthermore, \eqref{eq:theta2} holds as we can take
  \[
     \theta = \min_{\bar{V}} \frac{z(\psi)}{(1 + |D\psi|^2)^{3/2}} > 0.
  \]
  Now applying Lemma~\ref{le:Hopfbdy} with $U = V$ yields the result.
  \qed
\end{pfthmsmax2}

\textbf{Added in Proof (May 21, 2010):} 
The assumption that \emph{$M$ being $k$-convex when $k \ge 3$} can in fact be removed from Theorem 1.1, Theorem 1.3, and Theorem 2.1. We are very grateful to Professor Pengfei Guan for pointing this out to us. 

The reason is as follows: First, observe that $M$ is $k$-convex everywhere on $M$ as long as it is $k$-convex at \emph{one point}. This is due to the well--known fact (see, for example, \cite{CNS} and \cite[p. 51]{HuiSin}): The convex cone $\Gamma_k \equiv \{ \kappa \in \mathbb{R}^n : \sigma_1(\kappa) >0, \sigma_2(\kappa) >0, \dots, \sigma_k(\kappa) >0 \}$ is the \emph{connected component} of $\{\kappa \in \mathbb{R}^n : \sigma_k (\kappa) > 0\}$ containing $\{\kappa \in \mathbb{R}^n : \kappa_1>0, \dots, \kappa_n >0 \}$. 
Thus, it suffices to find a point $p$ in $M$ so that $\kappa(p) \in \Gamma_k$. By the incorporation condition, $\partial M$ lies in a hyperplane, denoted by $L$. Because $M$ is bounded, we can start from 
a hyperplane which is parallel to $L$ and disjoint from $M$, and translate the hyperplane upward  until it begins to contact $M$ at some interior point $p_0$.
Then, the principal curvature $\kappa_i \ge 0$ at $p_0$ for all $1 \le i \le n$ (in particular, in hyperbolic space we have $\kappa_i > 0$ at $p_0$ for all $i$). By the assumption, $\sigma_k(\kappa) > 0$ on $M$. Applying Maclaurin's inequality yields that
\[
   \sigma_j^{1/j}(\kappa) \ge \sigma_k^{1/k}(\kappa)  > 0 \quad \textup{at $p_0$, \quad for all $1 \le j <k$.}
\]
This shows that $\kappa (p_0) \in \Gamma_k$.

\end{document}